\newtheorem{theorem}{Theorem}[section]
\newtheorem{corollary}[theorem]{Corollary}
\newtheorem{definition}[theorem]{Definition}
\newtheorem{lemma}[theorem]{Lemma}
\newtheorem{remark}[theorem]{Remark}
\newenvironment{proof}[1][Proof]{\noindent\textbf{#1.} }{\ \rule{0.5em}{0.5em}}
\begin{document}

\date{}

\title{Eigenvalue estimates via H\"{o}mander's $L^2$-method}
\author{Qingchun Ji \\ Li Lin}

\maketitle

\footnotetext{Partially
supported by NSFC 11671090.}

\begin{abstract}
Under various elliptic boundary conditions, we obtain lower  eigenvalue estimates for Dirac operators by using Hormander's weighted $L^2$-technique. Lower bounds in terms of the volume of the underlying manifold are also deduced from the sharp Sobolev inequality due to Li and Zhu(\cite{LZ}).
\end{abstract}

%\

%Insert `2000 Mathematics Subject Classification' numbers here:
%\classno{53C21, 53D45 (primary), 58J50 (secondary)}

\section{Introduction}
In this paper, we apply H\"{o}mander's weighted $L^2$-method(\cite{H}) to study eigenvalues of Dirac operators of Dirac bundles over Riemannian manifolds. The conformal covariance(\cite{Hi}) of the classical Dirac operators played an important role in estimating eigenvalues of the classical  Dirac operator on spin manifolds, see \cite{H}-\cite{HMZ}, \cite{FK}, \cite{CWZ} and the references therein. Different from spinor bundles over spin manifolds,  the connection of the Dirac bundle is not determined by the Levi-Civita connection of the underlying manifold. In general, we don't have conformal covariance for Dirac operators of Dirac bundles. B\"{a}r(\cite{B}) generalized the Hijazi estimate(\cite{H}) to Dirac operators of Dirac bundles over closed manifolds. To avoid the use of conformal covariance, the modified connection is the key technique in B\"{a}r's proof. We will consider eigenvalue bounds for Dirac operators of Dirac bundles under both local(Definitions \ref{Md}, \ref{Ld}) and global(Definitions \ref{Ad}, \ref{MAd}) boundary conditions. Our weighted $L^2$-identity is given by Lemma \ref{weighted-L2-prop} below where the boundary terms will be dealt with using the Morrey trick for the case of local boundary condition and the boundary Dirac operator for the case of global condition. By a rescaling argument, we also obtain lower bounds in terms of the volume of the underlying manifolds where the Li-Zhu inequality (\cite{LZ}) is the fundamental tool. Recently, Chang, Chen and Wu(\cite{CCW}) also study eigenvalue estimates in CR geometry by establishing weighted Rayleigh formula, their method is closely related to this paper.
\bigskip\

\section{Weighted $L^2$-estimates for Dirac operators}

\subsection{Dirac bundles and Dirac operators\label{Dirac-bdl}}

In this section, we recall some basic facts of the Dirac operator and set up
the notations . Let $(M,g)$ be a smooth $n$-dimensional Riemannian manifold
and $C\ell \left( M,g\right) \rightarrow M$ be the corresponding \emph{%
Clifford bundle}. Let $\mathbb{S}\rightarrow M$ be a bundle of left $C\ell
\left( M,g\right) $-modules endowed with a Riemannian metric $\langle \cdot
,\cdot \rangle $ and a Riemannian connection $\nabla $ such that at any $%
x\in M$, for any unit vector $e\in T_{x}M$ and any $s_1,s_2\in \mathbb{
S}_{x}$, 
\begin{equation}
\left\langle e\cdot s_1,e\cdot s_2\right\rangle =\left\langle
s_1,s_2\right\rangle  \label{orthogonal}
\end{equation}%
where the $\cdot $ is the Clifford multiplication. Furthermore, for any
smooth vector field $X$ on $M$ and smooth section $s$ of $\mathbb{S}$, 
\begin{equation}
\nabla \left( X\cdot s\right) =\left( \nabla X\right) \cdot s+X\cdot \left(
\nabla s\right) ,  \label{Leibniz}
\end{equation}%
where on the right hand side, $\nabla $ are the covariant derivatives of the
Levi-Civita connection on $M$ and a connection on $\mathbb{S}$ for the first
and second terms respectively.

\begin{definition}
A bundle $(\mathbb{S},\langle\cdot,\cdot\rangle,\nabla)$ of $C\ell\left(
M,g\right) $-modules satisfying $\left( \ref{orthogonal}\right) ,\left( \ref%
{Leibniz}\right) $ is called a \emph{Dirac bundle} over $M$ (Definition 5.2 
\cite{LM}) and has a canonically associated \emph{Dirac operator} $D$ such
that for any section $s$ of $\mathbb{S}$, 
\begin{equation}
Ds=\sum_{i=1}^{n}e_{i}\cdot \nabla _{e_{i}}s,  \label{DiracOper}
\end{equation}%
where $\left\{ e_{i}\right\} _{i=1}^{n}$ is any orthonormal basis of $T_{x}M$
for $x$ on $M$.
\end{definition}

On the Dirac bundle $\mathbb{S}$ we can define the \emph{canonical section} $%
\mathfrak{R}$ of $\mathrm{End}\left( \mathbb{S}\right) $, such that for any
smooth section $s$ of $\mathbb{S}$, 
\begin{equation}
\mathfrak{R}s =\frac{1}{2}\sum_{i,j=1}^{n}e_{i}\cdot e_{j}\cdot R_{e_{i},e_{j}}s,  \label{R-operator}
\end{equation}%
where $R_{e_i,e_j}s=\nabla^2s(e_i,e_j)-\nabla^2s(e_j,e_i)$ is the curvature of $\mathbb{S}$. Then we have the
Bochner formula (c.f. Theorem 8.2 in Chapter II \cite{LM}) 
\begin{equation}
D^{2}=\nabla ^{\ast }\nabla +\mathfrak{R}.  \label{Gen-Bochner}
\end{equation}%
Especially, when $\mathbb{S}$ is the \emph{spinor bundle} over a spin
manifold $M\,$, Lichnerowicz's theorem says 
\begin{equation}
\mathfrak{R}=\frac{1}{4}R\cdot \mathrm{Id}_{\mathbb{S}},  \label{spin-R}
\end{equation}%
where $R$ is the scalar curvature of $\left( M,g\right) $, and $\mathrm{Id}_{%
\mathbb{S}}$ is the identity map on $\mathbb{S}$.

For any $x\in M$, $\mathfrak{R}\left( x\right)
\in \mathrm{End}\left( \mathbb{S}_{x}\right) $ is self-adjoint , we denote 
\begin{equation}
\kappa\left( x\right) =\text{the smallest eigenvalue of }%
\mathfrak{R}\left( x\right). \label{first-eigen-fcn}
\end{equation}%
Obviously, $\kappa\left( x\right) $ is a 
\emph{Lipschitz} function on $M$. 

\bigskip

The following Lemma will be used to describe when the eigenvalue lower bounds can be achieved. Note that we don't have the Ricci identity(for the Ricci identity on spin manifolds, we refer to \cite{F} ) for general Dirac bundle.
\begin{lemma}Let $a,b,c\in\mathbb{C}$ be constants, $s\in\Gamma(M,\mathbb{S})$. Assume $\nabla_X s = aX\cdot s+bX\cdot\nabla\phi\cdot s + cX(\phi)s$ for all tangent vector $X$ and some $\phi\in C^1(M)$, then\begin{eqnarray}
\mathfrak{R}s&=&(n-1)\Bigg(\Big(na^2+(n-2)b^2\vert\nabla\phi\vert^2+b\Delta\phi\Big)s+2ab\nabla\phi\cdot s\Bigg).\label{RI}
\end{eqnarray}
\end{lemma}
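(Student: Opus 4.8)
The strategy is a direct computation from the definition $\mathfrak{R}s=\frac12\sum_{i,j}e_i\cdot e_j\cdot R_{e_i,e_j}s$ with $R_{e_i,e_j}s=\nabla^2 s(e_i,e_j)-\nabla^2 s(e_j,e_i)$. First I would fix a point $p\in M$ and work in a geodesic normal frame $\{e_i\}$ with $\nabla e_i|_p=0$, so that at $p$ one has $\nabla^2 s(e_i,e_j)=\nabla_{e_i}\nabla_{e_j}s-\nabla_{\nabla_{e_i}e_j}s=\nabla_{e_i}(\nabla_{e_j}s)$. Substituting the hypothesis $\nabla_X s=aX\cdot s+bX\cdot\nabla\phi\cdot s+cX(\phi)s$ into $\nabla_{e_i}(\nabla_{e_j}s)$ means differentiating the three terms $a\,e_j\cdot s$, $b\,e_j\cdot\nabla\phi\cdot s$, and $c\,e_j(\phi)\,s$ again with respect to $e_i$, each time using the Leibniz rule (\ref{Leibniz}) for $\nabla$ over Clifford multiplication and re-applying the hypothesis whenever $\nabla s$ reappears. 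This produces terms involving $a^2$, $ab$, $b^2$, $ac$, $bc$ (the $c^2$ and pure-$c$ pieces will turn out to cancel or reorganize), together with second derivatives of $\phi$ coming from $\nabla_{e_i}(e_j\cdot\nabla\phi)$ and $\nabla_{e_i}(e_j(\phi))$.

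The key simplification is antisymmetrization in $i,j$: after forming $\nabla^2 s(e_i,e_j)-\nabla^2 s(e_j,e_i)$ and contracting with $e_i\cdot e_j\cdot$, many terms drop out. The symmetric Hessian of $\phi$, $\nabla^2\phi(e_i,e_j)$, survives only through its trace because $\sum_{i,j}e_i\cdot e_j\cdot(\text{symmetric in }i,j)=-\sum_i e_i\cdot e_i\cdot(\cdots)=-\,\mathrm{tr}$, using the Clifford relation $e_i\cdot e_j+e_j\cdot e_i=-2\delta_{ij}$; this is the origin of the $b\Delta\phi$ term. The purely algebraic pieces reduce via standard Clifford identities: $\sum_{i,j}e_i\cdot e_j\cdot e_i\cdot e_j=-n(n-1)\cdot?$ type manipulations — more precisely one needs $\sum_i e_i\cdot v\cdot e_i=(n-2)v$ and $\sum_i e_i\cdot e_i=-n$ for a tangent vector $v$ — to extract the coefficients $na^2$, $(n-2)b^2|\nabla\phi|^2$, and the cross term $2ab\,\nabla\phi\cdot s$. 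One must also handle $\nabla\phi\cdot\nabla\phi=-|\nabla\phi|^2$ when two gradient factors collide. I would organize the computation by grouping contributions according to which of the three terms of the hypothesis each derivative hits, compute each group's antisymmetrized Clifford-contracted value separately, and then sum.

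The main obstacle is bookkeeping rather than conceptual: keeping careful track of the order of Clifford factors (since Clifford multiplication is noncommutative, $X\cdot\nabla\phi\cdot s\neq\nabla\phi\cdot X\cdot s$) through two rounds of the Leibniz rule, and correctly using the connection compatibility $\nabla_X(Y\cdot s)=(\nabla_X Y)\cdot s+Y\cdot\nabla_X s$ so that the $\nabla_{e_i}e_j$ terms vanish at $p$ in the normal frame while the $\nabla_{e_i}(\nabla\phi)$ terms contribute the Hessian. A secondary subtlety is verifying that the would-be $c$-dependent terms (from differentiating $c\,e_j(\phi)s$ and from the $c$ reappearing via $\nabla s$) cancel upon antisymmetrization, so that the final answer is independent of $c$ as stated; this should follow because $e_j(\phi)$ is a scalar and $\nabla_{e_i}(e_j(\phi))=\nabla^2\phi(e_i,e_j)+(\nabla_{e_i}e_j)(\phi)$ is symmetric in $i,j$ at $p$, hence annihilated by the antisymmetric Clifford contraction, while the cross terms $ac$ and $bc$ pair off against each other. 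Once all groups are assembled and the overall factor of $\frac12$ and the $(n-1)$ are pulled out, one arrives at (\ref{RI}).
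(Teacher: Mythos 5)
Your plan is a direct, pointwise computation of $\mathfrak{R}s$ from its definition in a normal frame, substituting the twistor-type equation into $\nabla_{e_i}\nabla_{e_j}s$, antisymmetrizing, and reducing with Clifford identities and the vanishing of symmetric terms under the contraction $\sum_{i,j}e_i\cdot e_j\cdot$ — this is exactly what the paper does. The only small blemishes are cosmetic (the intermediate display $\sum_{i,j}e_i\cdot e_j\cdot S_{ij}=-\sum_i e_i\cdot e_i S_{ii}$ should carry a $+$ on the right, and the $ac$- and $bc$-terms each cancel internally by $i\leftrightarrow j$ symmetry rather than pairing with one another), neither of which affects the validity of the approach.
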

\begin{proof}
We work with an orthonormal frame $\{e_i\}_{i=1}^n$ which is normal at a given point, and denote $\phi_i = e_i(\phi), \phi_{ij}=\langle\nabla_i\phi,e_j\rangle$ for $1\leq i,j\leq n$. We will compute $\nabla^2s(e_i,e_j)$ by the assumption and moving $e_i,e_j$ to the left. At the given point, we have 
\begin{eqnarray*}
\nabla^2s(e_i,e_j)&=& \nabla_{e_i}\nabla_{e_j}s \\ &=& \left(ae_j\cdot+be_j\cdot\nabla\phi\cdot+c\phi_j\right)\left(ae_i\cdot s+be_i\cdot\nabla\phi\cdot s+c\phi_is\right) \\ &&+be_j\cdot\nabla_{e_i}\nabla\phi\cdot s+c\phi_{ij}s\\&=& \left(a^2+b^2\vert\nabla\phi\vert^2\right)e_j\cdot e_i\cdot s +be_j\cdot\nabla_{e_i}\nabla\phi\cdot s + \left(c\phi_{ij}+c^2\phi_i\phi_j\right)s\\ && +\big(a(c-2b)\phi_ie_j+ac\phi_je_i+bc\phi_j e_i\cdot\nabla\phi+b(c-2b)\phi_ie_j\cdot\nabla\phi\big)\cdot s 
\end{eqnarray*}
which gives
\begin{eqnarray*}
\mathfrak{R}s&=&\frac{1}{2}\sum_{i,j=1}^{n}e_{i}\cdot e_{j}\cdot (\nabla^2s(e_i,e_j)-\nabla^2s(e_j,e_i)) \\ &=& \sum_{i\neq j}e_{i}\cdot e_{j}\cdot \nabla^2s(e_i,e_j)\\ &=&(n-1)\Big(\big(na^2+(n-2)b^2\vert\nabla\phi\vert^2+b\Delta\phi\big)s+2ab\nabla\phi\cdot s\Big),
\end{eqnarray*}where we have used in the last equality \begin{eqnarray*}
\sum_{i\neq j}e_{i}\cdot e_{j}\cdot e_j\cdot\nabla_{e_i}\nabla\phi\cdot s&=&-(n-1)\sum_{i=1}^ne_i\cdot\nabla_{e_i}\nabla\phi\cdot s\\ &=&-(n-1)\sum_{i,j=1}^n\phi_{ij}e_i\cdot e_j\cdot s\\ &=& (n-1)\Delta\phi s.
\end{eqnarray*}
The proof is complete.
\end{proof}

\subsection{Weighted $L^{2}$-estimates\label{proofs}}

Let $\mathbb{S}$ be a Dirac bundle over a Riemannian manifold $\left(
M,g\right)$(with or without boundary) and $D$ be the Dirac operator.

Given $s_1,s_2\in\Gamma (M,\mathbb{S})$, since  
\begin{eqnarray}
{\rm div}\left(\sum_{i=1}^n\langle e_i\cdot s_1, s_2\rangle e_i\right)&=&\langle Ds_1,s_2\rangle -\langle s_1,Ds_2\rangle \label{I1}, 
\end{eqnarray}
we have 
\begin{eqnarray}
\int_M\langle Ds_1,s_2\rangle = \int_M\langle s_1,Ds_2\rangle + \int_{\partial{M}}\langle \nu\cdot s_1,s_2\rangle\label{I1'}
\end{eqnarray}where $\nu$ is the outward unit normal vector field of $\partial{M}$, we require that one of $s_1$ and $s_2$ has compact support if the underlying manifold $M$ is non-compact.

Similarly, for $\theta\in\Gamma(M,T^*M\otimes\mathbb{S})$ and $s\in\Gamma(M,\mathbb{S})$
\begin{eqnarray}
{\rm div}\left(\sum_{i=1}^n\langle \theta(e_i), s\rangle e_i\right)&=&\langle \theta,\nabla s\rangle -\langle \nabla^*\theta,s\rangle \label{I2}
\end{eqnarray}
where $\nabla^*\theta:=-\sum_{i=1}^n(\nabla_{e_i}\theta)(e_i).$ If we assume as above that one of $\theta$ and $s$ is compactly supported when $M$ is non-compact, then
\begin{eqnarray}
\int_M\langle \nabla^*\theta,s\rangle = \int_M\langle\theta,\nabla s\rangle - \int_{\partial{M}}\langle \theta(\nu),s\rangle\label{I2'}
\end{eqnarray}

Integrate $(\ref{Gen-Bochner})$ by using (\ref{I1'}) and (\ref{I2'}), 
\begin{eqnarray}
\int_{M}\left\vert Ds\right\vert ^{2}&=&\int_{M}\left( \left\vert \nabla
s\right\vert ^{2}+\left\langle s,\mathfrak{R}s\right\rangle \right)-\int_{\partial{M}}\left\langle \nu\cdot Ds+\nabla_\nu s,s\right\rangle\notag \\ &=& \int_{M}\left( \left\vert \nabla
s\right\vert ^{2}+\left\langle s,\mathfrak{R}s\right\rangle \right)-\sum_{\alpha=1}^{n-1}\int_{\partial{M}}\left\langle \nu\cdot e_\alpha\cdot\nabla_{e_\alpha}s,s\right\rangle .
\label{L2-identity}
\end{eqnarray}
where we have used the adapted frame along $\partial{M}$, i.e., $e_n = \nu$.

\bigskip We introduce a twistor operator for every $\eta\in\mathbb{R}^2$ \begin{equation}P_\eta(X,s):=\eta_1\nabla_Xs + \eta_2 X\cdot Ds.\label{Pen}\end{equation}
where $X\in\Gamma(M,TM), s\in\Gamma(M,\mathbb{S})$. For a given $s\in\Gamma(M,\mathbb{S})$, we denote $P_\eta s:=P_\eta(\cdot,s)\in\Gamma(M,T^*M\otimes\mathbb{S}).$ By definition, we have\begin{equation}\sum_{i=1}^ne_i\cdot P_\eta(e_i,s)=(\eta_1-n\eta_2)Ds\label{P}
\end{equation} and
\begin{equation}\vert P_\eta s\vert^2=\eta_1^2\vert\nabla s\vert^2 + \eta_2(n\eta_2-2\eta_1)\vert Ds\vert^2.\label{P0}
\end{equation}for any $s\in\Gamma(M,\mathbb{S})$. According to (\ref{P0}), we can rewrite the identity (\ref{L2-identity}) as follows which will be the starting point of our weighted estimate.
\begin{equation}\|\eta\|^2\int_{M}\left\vert Ds\right\vert ^{2} =  \int_{M}\left( \left\vert P_\eta s\right\vert ^{2}+\eta_1^2\left\langle s,\mathfrak{R}s\right\rangle \right)-\eta_1^2\sum_{\alpha=1}^{n-1}\int_{\partial{M}}\left\langle \nu\cdot e_\alpha\cdot\nabla_{e_\alpha}s,s\right\rangle .
\label{unw}
\end{equation}for all $s\in\Gamma(M,\mathbb{S})$, where $\eta\in\mathbb{R}^2$ and $\|\eta\|^2:=\eta_1^2-2\eta_1\eta_2+n\eta_2^2$.

\bigskip

In what follows, we will work with weighted $L^2$-spaces.
\begin{definition}
(Weighted $L^{2}$-space)\label{weighted-L2-space} Let $\varphi :M\rightarrow 
\mathbb{R}$ be a $C^{2}$ function. For any sections $s_1$ and $s_2$ of 
$\mathbb{S}$, let the \emph{weighted }inner product of $s$ and $s^{\prime }$
be%
\begin{equation*}
\left( s_1,s_2\right) _{\varphi }=\int_{M}\left\langle s_1,s_2\right\rangle e^{-\varphi }.
\end{equation*}
where $\varphi :M\rightarrow \mathbb{R}$ is a $C^2$ function. Let $%
\left\Vert s\right\Vert _{\varphi }=\sqrt{\left(s,s\right) _{\varphi }} $
and denote by $L_{\varphi }^{2}\left( M,\mathbb{S}\right) $ be the space of
sections $s$ of $\mathbb{S}$ such that $\left\Vert s\right\Vert _{\varphi
}<\infty $. We will drop the subscript $\varphi $ when $\varphi =0$.
\end{definition}

For the Dirac operator $D:L_{\varphi }^{2}\left( M,\mathbb{S}\right)
\rightarrow L_{\varphi }^{2}\left( M,\mathbb{S}\right) $, let $D_{\varphi
}^{\ast }$ be its formal adjoint with respect to the measure $e^{-\varphi
}dvol_{g}$. For $D_{\varphi }^{\ast }$, we have the following identity which
is immediate from definition.

\begin{equation}
D_{\varphi }^{\ast }s=e^{\varphi }D\left( e^{-\varphi }s\right) =-\nabla
\varphi \cdot s+Ds.  \label{weighted-Dirac-relation}
\end{equation}

\bigskip

We will make use of the following weighted twistor operator $P_{\eta,\phi}$ for any $\phi\in C^1(M)$ and $\eta\in\mathbb{R}^2$ $$P_{\eta,\phi}(X,s):=P_\eta(e^\phi X,e^{-\phi}s).$$ From $e^\phi \circ \nabla\circ e^{-\phi} = \nabla - X(\phi)$ and (\ref{weighted-Dirac-relation}), it follows immediately 
\begin{equation}
P_{\eta,\phi} (X,s) = P_\eta (X,s) -\eta_1X(\phi)s -\eta_2 X\cdot\nabla\phi\cdot s\label{P1}
\end{equation}for any $X\in\Gamma(M,TM)$ and $s\in\Gamma(M,\mathbb{S})$.
We also adopt the notation $P_{\eta,\phi} s:=P_{\eta,\phi}(\cdot,s)\in\Gamma(M,T^*M\otimes\mathbb{S})$ for a given $s\in\Gamma(M,\mathbb{S})$.

\bigskip

Let $\Delta $ be the Laplace-Beltrami operator for functions on $\left( M,g\right) $. Here is an identity between twistor operators with different weights.

\begin{lemma}
Let $\phi\in C^1(M)$ be real function and $\eta\in \mathbb{R}^2$, then we have
\begin{eqnarray}
\vert P_{\eta}s\vert^2+2(\eta_1^2-\|\eta\|^2){\rm Re}\left\langle\nabla\phi\cdot s,Ds\right\rangle &=&  \vert P_{\eta,\phi}s\vert^2 +\eta_1^2 {\rm div}\left(\vert s\vert^2\nabla\phi\right) \label{PP2} \\ && - \big(\eta_1^2\Delta\phi +  \|\eta\|^2\vert\nabla\phi\vert^2 \big)\vert s\vert^2
\notag\end{eqnarray}
holds for all $s\in\Gamma(M,\mathbb{S})$ where $\|\eta\|^2:=\eta_1^2-2\eta_1\eta_2+n\eta_2^2.$ 
%In particular, when $\eta_1=0$, we have \begin{equation}\vert %P_{\eta,\phi}s\vert^2 = \|\eta\|^2\vert D_\phi^*s\vert^2
%\label{PP2'}\end{equation}
\end{lemma}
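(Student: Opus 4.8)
The plan is to prove the identity (\ref{PP2}) by direct pointwise computation, working at an arbitrary point of $M$ with a $g$-orthonormal frame $\{e_i\}$ that is normal at that point, and then expanding both sides using the defining relation (\ref{P1}) for $P_{\eta,\phi}$. First I would write $P_{\eta,\phi}(e_i,s) = P_\eta(e_i,s) - \eta_1\phi_i s - \eta_2 e_i\cdot\nabla\phi\cdot s$ (with $\phi_i := e_i(\phi)$) and compute $|P_{\eta,\phi}s|^2 = \sum_i |P_{\eta,\phi}(e_i,s)|^2$ by squaring this sum. This produces the leading term $|P_\eta s|^2$, two cross terms, and two ``quadratic in $\nabla\phi$'' terms. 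For the quadratic terms I would use the Clifford relation $e_i\cdot\nabla\phi\cdot + \nabla\phi\cdot e_i\cdot = -2\phi_i$ together with the orthogonality property (\ref{orthogonal}) to get $\sum_i |e_i\cdot\nabla\phi\cdot s|^2 = n|\nabla\phi|^2|s|^2$ and $\sum_i \phi_i\,{\rm Re}\langle e_i\cdot\nabla\phi\cdot s, s\rangle = |\nabla\phi|^2|s|^2$ (the latter via the skew-adjointness of Clifford multiplication by $\nabla\phi$); collecting these with the $\eta_1^2\phi_i^2$ term yields the coefficient $-\|\eta\|^2|\nabla\phi|^2$ on the right side of (\ref{PP2}), using $\|\eta\|^2 = \eta_1^2 - 2\eta_1\eta_2 + n\eta_2^2$.

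The cross terms are where the real work lies. Summing over $i$, the cross term between $P_\eta(e_i,s)$ and $-\eta_1\phi_i s - \eta_2 e_i\cdot\nabla\phi\cdot s$ is $2\,{\rm Re}\sum_i \langle \eta_1\nabla_{e_i}s + \eta_2 e_i\cdot Ds,\, -\eta_1\phi_i s - \eta_2 e_i\cdot\nabla\phi\cdot s\rangle$. I would expand this into four pieces. The piece $-2\eta_1^2\,{\rm Re}\sum_i\phi_i\langle\nabla_{e_i}s,s\rangle = -\eta_1^2\sum_i \phi_i\, e_i(|s|^2) = -\eta_1^2\langle\nabla\phi,\nabla|s|^2\rangle$, which I would rewrite as $-\eta_1^2\,{\rm div}(|s|^2\nabla\phi) + \eta_1^2|s|^2\Delta\phi$, accounting for the ${\rm div}$ term and the $\Delta\phi$ term in (\ref{PP2}) — though note the sign conventions need care: moving it to the left contributes $+\eta_1^2{\rm div}(|s|^2\nabla\phi) - \eta_1^2\Delta\phi|s|^2$, matching the stated right-hand side. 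The piece $-2\eta_1\eta_2\,{\rm Re}\sum_i\langle e_i\cdot Ds,\phi_i s\rangle = -2\eta_1\eta_2\,{\rm Re}\langle\nabla\phi\cdot Ds, s\rangle = 2\eta_1\eta_2\,{\rm Re}\langle Ds,\nabla\phi\cdot s\rangle$ (again using skew-adjointness of Clifford multiplication). The piece $-2\eta_1\eta_2\,{\rm Re}\sum_i\phi_i\langle\nabla_{e_i}s, \nabla\phi... \rangle$ — actually $-2\eta_1\eta_2\,{\rm Re}\sum_i\langle\nabla_{e_i}s, e_i\cdot\nabla\phi\cdot s\rangle$ — can be reorganized using $\sum_i e_i\cdot\nabla_{e_i}s = Ds$ after moving $e_i$ across, giving another multiple of ${\rm Re}\langle Ds,\nabla\phi\cdot s\rangle$. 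Finally the piece $-2\eta_2^2\,{\rm Re}\sum_i\langle e_i\cdot Ds, e_i\cdot\nabla\phi\cdot s\rangle = -2\eta_2^2 n\,{\rm Re}\langle Ds,\nabla\phi\cdot s\rangle \cdot(\text{sign})$ using (\ref{orthogonal}) with the unit vectors $e_i$. Collecting all the ${\rm Re}\langle\nabla\phi\cdot s, Ds\rangle$ contributions should give total coefficient $2(\eta_1^2 - \|\eta\|^2) = 2(2\eta_1\eta_2 - n\eta_2^2)$, which is exactly the coefficient on the left-hand side of (\ref{PP2}).

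The main obstacle I anticipate is bookkeeping of the Clifford-algebra cross terms together with their complex conjugates and signs: one must consistently use that $e_i\cdot$ is skew-adjoint, that $e_i\cdot e_i\cdot = -1$, and the anticommutation $e_i\cdot\nabla\phi\cdot + \nabla\phi\cdot e_i\cdot = -2\langle e_i,\nabla\phi\rangle = -2\phi_i$, and in several places one wants to write ${\rm Re}\langle e_i\cdot\nabla_{e_i}s,\nabla\phi\cdot s\rangle$ summed over $i$ as ${\rm Re}\langle Ds,\nabla\phi\cdot s\rangle$, which requires care because moving $e_i$ from the first slot to combine into $Ds$ can introduce the wrong-sign partner term; those partner terms are precisely the ones that combine with the ``$\phi_i\nabla_{e_i}s$'' pieces to form the divergence. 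A clean way to avoid error is to observe that $P_{\eta,\phi}s = P_\eta s - \eta_1 s\otimes d\phi - \eta_2(\nabla\phi)\lrcorner(\text{Clifford}) $ is, fiberwise, $P_\eta s$ shifted by a fixed vector-valued endomorphism applied to $s$, so $|P_{\eta,\phi}s|^2 - |P_\eta s|^2$ is manifestly $-2\,{\rm Re}\langle P_\eta s, (\text{shift})\rangle + |\text{shift}|^2$; then only the first summand mixes derivatives of $s$, and its divergence-producing part comes solely from the $\eta_1^2$-term via the Leibniz rule for $e_i(|s|^2)$. I would organize the write-up around that decomposition, compute $|\text{shift}|^2 = (\eta_1^2 + (\text{stuff}))|\nabla\phi|^2|s|^2$ first, then handle $-2\,{\rm Re}\langle P_\eta s,\text{shift}\rangle$ term by term, and finally substitute $\|\eta\|^2$ to recognize the coefficients — keeping each displayed equation free of internal blank lines.
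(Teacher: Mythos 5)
Your plan is the same direct computation the paper uses: expand $\vert P_{\eta,\phi}s\vert^2$ via (\ref{P1}), use the Clifford anticommutation relation and the identity (\ref{P}) (or equivalently split $P_\eta(e_i,s)=\eta_1\nabla_{e_i}s+\eta_2 e_i\cdot Ds$) to organize the cross terms, and convert $\langle\nabla\phi,\nabla\vert s\vert^2\rangle$ into $\operatorname{div}(\vert s\vert^2\nabla\phi)-\vert s\vert^2\Delta\phi$; all the pieces you enumerate do assemble to the stated identity. One small sign slip to fix in the write-up: since $\sum_i\phi_i\,e_i\cdot\nabla\phi=\nabla\phi\cdot\nabla\phi=-\vert\nabla\phi\vert^2$, you should have $\sum_i\phi_i\,{\rm Re}\langle e_i\cdot\nabla\phi\cdot s,s\rangle=-\vert\nabla\phi\vert^2\vert s\vert^2$ (not $+$); with the correct sign the cross term contributes $-2\eta_1\eta_2\vert\nabla\phi\vert^2\vert s\vert^2$, which is exactly what you need to reach the coefficient $\Vert\eta\Vert^2=\eta_1^2-2\eta_1\eta_2+n\eta_2^2$.
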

\begin{proof}
We prove (\ref{PP2}) by a derect computation of the norm of $P_{\eta,\phi}s$.
\begin{eqnarray}\vert P_{\eta,\phi}s\vert^2 &=& \sum_{i=1}^n\vert P_\eta(e_i,s)-\eta_1e_i(\phi)s-\eta_2 e_i\cdot\nabla\phi\cdot s\vert^2 \ \ {\rm by} \ \ (\ref{P1})\notag \\ &=& \vert P_\eta s\vert^2  +\eta_1^2\vert\nabla\phi\vert^2\vert s\vert^2+ n\eta_2^2\vert\nabla\phi\vert^2\vert s\vert^2+2\eta_2(\eta_1-n\eta_2){\rm Re}\langle Ds,\nabla\phi\cdot s\rangle \  \ {\rm by} \ (\ref{P})\notag \\ && +2{\rm Re}\sum_{i=1}^n\big(\eta_1\eta_2\langle e_i\cdot\nabla\phi\cdot s,e_i(\phi)s\rangle-\eta_1\langle P_\eta(e_i,s),e_i(\phi)s\rangle\big)
\notag \\ &=& \vert P_\eta s\vert^2 +\|\eta\|^2\vert\nabla\phi\vert^2\vert s\vert^2 +2\eta_2(\eta_1-n\eta_2){\rm Re}\langle Ds,\nabla\phi\cdot s\rangle\notag \\&& -2\eta_1{\rm Re}\langle \eta_1\nabla_{\nabla\phi}s+\eta_2\nabla\phi\cdot Ds,s\rangle\notag \\ &=& \vert P_\eta s\vert^2 +\|\eta\|^2\vert\nabla\phi\vert^2\vert s\vert^2 +2\eta_2(2\eta_1-n\eta_2){\rm Re}\langle Ds,\nabla\phi\cdot s\rangle\notag \\&&  + \eta_1^2\vert s\vert^2 \Delta\phi-\eta_1^2{\rm div}(\vert s\vert^2\nabla\phi).\notag 
\end{eqnarray}
This is the desired identity (\ref{PP2}). 
%It is easy to see that (\ref{PP2'}) follows from (\ref{P0}) %and (\ref{PP2}).
\end{proof}

\bigskip

By the argument in \cite{JZ1}, we can introduce weights into (\ref{unw}) as follows.

\begin{lemma}
\label{weighted-L2-prop}Let $\mathbb{S}$ be a Dirac boundle over an $n$-dimensional Riemannian manifold $(M,g)$ which is compact with smooth boundary, then for all $s\in\Gamma(M,\mathbb{S})$, real functions $\varphi\in C^2(M)$ and $\tau,\delta\in\mathbb{R},\eta\in\mathbb{R}^2\setminus\{0\}$,
we have
\begin{eqnarray}
\left\Vert D_{\tau\varphi }^{\ast }s\right\Vert_{2(\tau-\delta)\varphi}^{2}&=& \frac{(\xi-1)\delta}{\xi}\int_M\left(\Delta\varphi+\frac{(\xi+1)\delta}{\xi}\vert\nabla\varphi\vert^2\right)\vert s\vert^2e^{2(\delta-\tau)\varphi} \notag\\ && +(1-\xi)\int_M\left(\frac{1}{\eta_1^2}\vert P_{\eta,(\tau+(\frac{1}{\xi}-1)\delta)\varphi}s\vert^2+\langle s,\mathfrak{R}s\rangle\right)e^{2(\delta-\tau)\varphi}
\notag \\&&+(\xi-1)\sum_{\alpha=1}^{n-1}\int_{\partial{M}}\left\langle \nu\cdot e_\alpha\cdot\nabla_{e_\alpha}s,s\right\rangle e^{2(\delta-\tau)\varphi}\notag \\ && 
+(\tau-\delta)(1-\xi)\int_{\partial{M}}\left\langle \nu\cdot \nabla\varphi \cdot s,s\right\rangle e^{2(\delta-\tau)\varphi}\notag \\ &&+\Big((1-\xi)\tau+(\frac{1}{\xi}+\xi-2)\delta\Big)\int_{\partial{M}}\nu(\varphi)\vert s\vert^2 e^{2(\delta-\tau)\varphi} \label{weighted-L2-est1}
\end{eqnarray}
where $\xi:=\frac{n\eta_2^2-2\eta_1\eta_2}{\eta_1^2-2\eta_1\eta_2+n\eta_2^2}$, $\{e_\alpha\}_{\alpha =1}^{n-1}$ is an orthogonal frame of $\partial{M}$ and $\nu$ is the outward unit normal vector field of $\partial{M}$. 
\end{lemma}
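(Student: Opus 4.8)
The plan is to start from the unweighted identity (\ref{unw}) and feed into it the substitution $s \mapsto e^{-c\varphi}s$ for a suitable real constant $c$, then combine the result with the twistor-comparison identity (\ref{PP2}) and the weighted Dirac relation (\ref{weighted-Dirac-relation}) to absorb all first-order weight terms. Concretely, I would first observe that replacing $s$ by $e^{a\varphi}s$ in (\ref{unw}) and integrating against the weight $e^{-2b\varphi}$ produces a version of (\ref{unw}) in which $Ds$ on the left becomes (up to the weight) $D_{a\varphi}^{*}$-type expressions via (\ref{weighted-Dirac-relation}), $P_\eta s$ becomes $P_{\eta,-a\varphi}s$ up to lower-order corrections by (\ref{P1}), and the boundary integrand picks up extra $\nu(\varphi)$ and $\nu\cdot\nabla\varphi\cdot s$ contributions from differentiating the exponential in the normal direction. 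The parameters $\tau,\delta$ in the statement should correspond to choices $a = $ (something linear in $\tau,\delta$) and $b=\tau-\delta$, and the auxiliary weight appearing in $P_{\eta,(\tau+(\frac1\xi-1)\delta)\varphi}$ tells me the internal twistor weight is $-(\tau+(\frac1\xi-1)\delta)\varphi$, which fixes the bookkeeping.

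Next I would use Lemma 2 (identity (\ref{PP2})) with the particular weight $\phi = (\tau+(\frac1\xi-1)\delta)\varphi$ to trade $|P_{\eta}s|^2$ against $|P_{\eta,\phi}s|^2$, a divergence term ${\rm div}(|s|^2\nabla\phi)$ whose boundary contribution will be an integral of $\nu(\phi)|s|^2$, a $\Delta\phi$ term, a $|\nabla\phi|^2$ term, and the cross term ${\rm Re}\langle\nabla\phi\cdot s, Ds\rangle$. The coefficient $2(\eta_1^2-\|\eta\|^2) = 2\eta_1^2\xi$ in front of that cross term (using the definition of $\xi$) is exactly what is needed to cancel the $\langle \nabla\varphi\cdot s, Ds\rangle$-type terms generated by the weighted Dirac relation $D_{\tau\varphi}^{*}s = -\tau\nabla\varphi\cdot s + Ds$ when we expand $\|D_{\tau\varphi}^{*}s\|^2$; this cancellation is what forces the specific value of $\xi$ and the specific weights in the statement. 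The scalar coefficients $\frac{(\xi-1)\delta}{\xi}$, $\frac{(\xi+1)\delta}{\xi}$, $(1-\xi)$, etc. then emerge by collecting the $\Delta\varphi|s|^2$, $|\nabla\varphi|^2|s|^2$, $|P_{\eta,\phi}s|^2$ and $\langle s,\mathfrak{R}s\rangle$ terms respectively, after dividing through by $\eta_1^2$ where appropriate.

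For the boundary terms I would keep the adapted frame $e_n=\nu$ throughout, so that the original boundary integrand $-\eta_1^2\sum_\alpha\langle\nu\cdot e_\alpha\cdot\nabla_{e_\alpha}s,s\rangle$ from (\ref{unw}) survives (with $e^{2(\delta-\tau)\varphi}$ weight and a sign flip to $(\xi-1)$ after rescaling by $1-\xi$ overall), while the divergence terms from the rescaling and from (\ref{PP2}) contribute, via the divergence theorem (\ref{I2'})-style boundary integrals, the $\nu(\varphi)|s|^2$ and $\nu\cdot\nabla\varphi\cdot s$ boundary integrals with the stated coefficients $(1-\xi)\tau+(\frac1\xi+\xi-2)\delta$ and $(\tau-\delta)(1-\xi)$. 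Here one must be careful that $\nabla_{e_\alpha}s$ on the boundary is unaffected by the weight (tangential derivative of $e^{c\varphi}$ times $s$ gives a term proportional to $e_\alpha(\varphi)e_\alpha\cdot s$ which sums telescopically but I expect it to be reorganized into the $\nu\cdot\nabla\varphi\cdot s$ and $\nu(\varphi)$ pieces rather than a genuinely new boundary term).

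The main obstacle I anticipate is the algebraic bookkeeping: tracking how the three independent real parameters ($\tau$, $\delta$, and the scalar $a$ hidden in the rescaling) together with the vector $\eta$ collapse into the two-parameter family displayed, and verifying that the cross-term cancellation using the definition of $\xi$ is exact rather than merely approximate. In particular I would double-check that $\eta_1^2-\|\eta\|^2 = 2\eta_1\eta_2 - n\eta_2^2 = \xi\|\eta\|^2$ up to the normalization implicit in dividing by $\eta_1^2$, and that the $\frac1\xi$ factors appearing in the statement are legitimate (i.e. the lemma implicitly assumes $\xi\neq 0$, equivalently $\eta_2\neq 0$ and $n\eta_2\neq 2\eta_1$, which should be noted). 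Everything else — substituting, expanding squares, invoking (\ref{P}), (\ref{P0}), (\ref{P1}), (\ref{weighted-Dirac-relation}) and the divergence theorem — is routine once the correct dictionary between $(\tau,\delta,\eta)$ and the rescaling constant is pinned down.
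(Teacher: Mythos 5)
Your approach is the same as the paper's: set $t=e^{(\delta-\tau)\varphi}s$ so that $\left\Vert D_{\tau\varphi }^{\ast }s\right\Vert_{2(\tau-\delta)\varphi}^{2}=\int_M\vert D^*_{\delta\varphi}t\vert^2$, expand $D^*_{\delta\varphi}t=Dt-\delta\nabla\varphi\cdot t$ via (\ref{weighted-Dirac-relation}), apply the unweighted identity (\ref{unw}) to $t$, then use (\ref{PP2}) to cancel the cross term $-2\|\eta\|^2\delta\,\mathrm{Re}\langle\nabla\varphi\cdot t,Dt\rangle$. Two algebraic slips you should fix before the bookkeeping closes. First, your stated identity $\eta_1^2-\|\eta\|^2=2\eta_1^2\xi$ (and later $\eta_1^2-\|\eta\|^2=\xi\|\eta\|^2$) is wrong in sign and normalization: since $\xi=(\|\eta\|^2-\eta_1^2)/\|\eta\|^2$, one has $\eta_1^2-\|\eta\|^2=-\xi\|\eta\|^2$ (equivalently $\eta_1^2=(1-\xi)\|\eta\|^2$). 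This sign determines the correct choice $\phi=\delta\varphi/\xi$ in (\ref{PP2}). Second, (\ref{PP2}) is applied to $t$, not to $s$, with $\phi=\delta\varphi/\xi$; the weight $(\tau+(\tfrac1\xi-1)\delta)\varphi$ you read off the statement is the composite weight that arises after converting $P_{\eta,\delta\varphi/\xi}t$ back to $s$, because $P_{\eta,\delta\varphi/\xi}t=e^{(\delta-\tau)\varphi}P_{\eta,(\tau+(\frac1\xi-1)\delta)\varphi}s$. With these corrections and the observation that the tangential derivative $\nabla_{e_\alpha}t$ produces the boundary terms $\nu\cdot(\nabla\varphi-\nu(\varphi)\nu)\cdot s$ (so the tangential part of $\nabla\varphi$ reorganizes exactly as you expect), the rest is the routine collection you describe. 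You are right that $\xi\neq 0$ must be assumed, i.e.\ $\eta$ off the line $\eta_2=0$ and $n\eta_2=2\eta_1$; the paper handles $\xi\to 0$ separately in Corollary \ref{weighted-L2-coro}.
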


\begin{proof}
Set $t :=e^{(\delta-\tau)\varphi }s$, then we know by (\ref{unw}) and (\ref{weighted-Dirac-relation}) 
\begin{eqnarray}
\|\eta\|^2\left\Vert D_{\tau\varphi }^{\ast }s\right\Vert_{2(\tau-\delta)\varphi}^{2}
&=&\|\eta\|^2\int_{M}\left\vert e^{\tau\varphi }D\left( e^{-\tau\varphi }s\right) \right\vert
^{2}e^{2(\delta-\tau)\varphi } \notag   \\ &=&\|\eta\|^2 \int_{M}\left\vert D_{\delta\varphi}^*t\right\vert
^{2} \notag   \\
&=&\|\eta\|^2\int_{M}\left\vert Dt -\delta\nabla \varphi \cdot t
\right\vert ^{2}  \notag \\
&=&\|\eta\|^2\int_{M}\left(\delta^2\left\vert \nabla
\varphi \right\vert ^{2}\left\vert t \right\vert ^{2}-2\delta\mathrm{Re}
\left\langle \nabla \varphi \cdot t ,Dt \right\rangle \right) , 
\notag \\ && +\int_{M}\left( \left\vert P_\eta t\right\vert ^{2}+\eta_1^2\left\langle t,\mathfrak{R}t\right\rangle \right)-\eta_1^2\sum_{\alpha=1}^{n-1}\int_{\partial{M}}\left\langle \nu\cdot e_\alpha\cdot\nabla_{e_\alpha}t,t\right\rangle \notag \\ &=&\int_{M}\left(\|\eta\|^2\delta^2\left\vert \nabla
\varphi \right\vert ^{2}\left\vert t \right\vert ^{2}+\eta_1^2\left\langle t,\mathfrak{R}t\right\rangle \right) 
+\int_{M}\left( \left\vert P_\eta t\right\vert ^{2}-2\|\eta\|^2\delta\mathrm{Re}
\left\langle \nabla \varphi \cdot t ,Dt \right\rangle\right)\notag \\ && -\eta_1^2\sum_{\alpha=1}^{n-1}\int_{\partial{M}}\left\langle \nu\cdot e_\alpha\cdot\nabla_{e_\alpha}t,t\right\rangle.\label{1}
\end{eqnarray}
Choosing $\phi=\frac{\delta\varphi}{1-\big(\frac{\eta_1}{\|\eta\|}\big)^2}=\frac{\delta\varphi}{\xi}$ in (\ref{PP2}), it follows that
\begin{eqnarray}
\int_{M}\left(\left\vert P_\eta t\right\vert ^{2}-2\|\eta\|^2\delta\mathrm{Re}
\left\langle \nabla \varphi \cdot t ,Dt \right\rangle \right) &=&-\int_M \left(\frac{\eta_1^2\delta}{\xi}\Delta\varphi + \frac{ \|\eta\|^2\delta^2}{\xi^2} \vert\nabla\varphi\vert^2 \right)\vert t\vert^2 \notag \\ && +\int_M \vert P_{\eta,\frac{\delta}{\xi}\varphi}t\vert^2 + \frac{\eta_1^2\delta}{\xi}\int_{\partial{M}}\nu(\varphi)\vert t\vert^2 \notag \\ &=&-\int_M \left(\frac{\eta_1^2\delta}{\xi}\Delta\varphi + \frac{ \|\eta\|^2\delta^2}{\xi^2} \vert\nabla\varphi\vert^2 \right)\vert s\vert^2e^{2(\delta-\tau)\varphi} \notag \\ && +\int_M \vert P_{\eta,(\tau+(\frac{1}{\xi}-1)\delta)\varphi}s\vert^2e^{2(\delta-\tau)\varphi} +  \frac{\eta_1^2\delta}{\xi}\int_{\partial{M}}\nu(\varphi)\vert s\vert^2e^{2(\delta-\tau)\varphi}. \notag
\end{eqnarray}
For the boundary term, we have 
\begin{eqnarray}
-\sum_{\alpha=1}^{n-1}\int_{\partial{M}}\left\langle \nu\cdot e_\alpha\cdot\nabla_{e_\alpha}t,t\right\rangle &=& -\sum_{\alpha=1}^{n-1}\int_{\partial{M}}\left\langle \nu\cdot e_\alpha\cdot\nabla_{e_\alpha}s,s\right\rangle e^{2(\delta-\tau)\varphi} \notag \\ && 
+(\tau-\delta)\int_{\partial{M}}\left\langle \nu\cdot \left(\nabla\varphi - \nu(\varphi)\nu\right)\cdot s,s\right\rangle e^{2(\delta-\tau)\varphi}\notag \\ &=& -\sum_{\alpha=1}^{n-1}\int_{\partial{M}}\left\langle \nu\cdot e_\alpha\cdot\nabla_{e_\alpha}s,s\right\rangle e^{2(\delta-\tau)\varphi} \notag \\ && 
+(\tau-\delta)\int_{\partial{M}}\left(\left\langle \nu\cdot \nabla\varphi \cdot s,s\right\rangle +\nu(\varphi)\vert s\vert^2\right) .\notag
\end{eqnarray}
Plugging the above identities into (\ref{1}), we obtain
the desired inequality. \end{proof}

As a function on $\mathbb{P}^1$, $\xi(\eta):=\frac{n\eta_2^2-2\eta_1\eta_2}{\eta_1^2-2\eta_1\eta_2+n\eta_2^2}$ has a maximun of 1 and a minimum of $\frac{-1}{n-1}$. The minimum value $\frac{-1}{n-1}$ is reached exactly along the line $\eta_1=n\eta_2$. When $\eta_1=n\eta_2$, $P_\eta = \eta_1 P$ where $P$ is the standard twistor operator $P(X,s):=P_{(1,\frac{1}{n})}(X,s)=\nabla_Xs + \frac{1}{n}X\cdot Ds$. As a direct consequence of Lemma \ref{weighted-L2-prop}, we have
\begin{corollary}
\label{2-dim}Let $\mathbb{S}$ be a Dirac boundle over an $n$-dimensional Riemannian manifold $(M,g)$ which is compact with smooth boundary, then for all $s\in\Gamma(M,\mathbb{S})$, real functions $\varphi\in C^2(M)$ and constants $\tau,\delta\in\mathbb{R}$,
we have
\begin{eqnarray}
\left\Vert D_{\tau\varphi }^{\ast }s\right\Vert_{2(\tau-\delta)\varphi}^{2}&=& n\delta\int_M\big(\Delta\varphi+(2-n)\delta\nabla\varphi\vert^2\big)e^{2(\delta-\tau)\varphi}
\notag \\&& +\frac{n}{n-1}\int_M\big(\vert P_{(\tau-n\delta)\varphi}s\vert^2+\langle s,\mathfrak{R}s\rangle\big)e^{2(\delta-\tau)\varphi}
\notag \\&&-\frac{n}{n-1}\sum_{\alpha=1}^{n-1}\int_{\partial{M}}\left\langle \nu\cdot e_\alpha\cdot\nabla_{e_\alpha}s,s\right\rangle e^{2(\delta-\tau)\varphi}\notag \\ && 
+\frac{n(\tau-\delta)}{n-1}\int_{\partial{M}}\left\langle \nu\cdot \nabla\varphi \cdot s,s\right\rangle e^{2(\delta-\tau)\varphi}\notag \\ &&+\frac{n(\tau-n\delta)}{n-1}\int_{\partial{M}}\nu(\varphi)\vert s\vert^2 e^{2(\delta-\tau)\varphi}. \label{weighted-L2-est11}
\end{eqnarray}
\end{corollary}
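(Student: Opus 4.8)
The plan is to obtain Corollary \ref{2-dim} as the special case of Lemma \ref{weighted-L2-prop} in which $\xi$ attains its extreme value $\frac{-1}{n-1}$. As noted in the remark immediately preceding the corollary, $\xi(\eta)=\frac{n\eta_2^2-2\eta_1\eta_2}{\eta_1^2-2\eta_1\eta_2+n\eta_2^2}$ equals $\frac{-1}{n-1}$ precisely along the line $\eta_1=n\eta_2$, so I would take $\eta=(n,1)$ (any nonzero multiple works, since both sides of (\ref{weighted-L2-est1}) scale the same way after noting $\eta_1^2$ appears in the denominator of the $\vert P_{\eta,\cdot}s\vert^2$ term). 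With this choice one computes $\|\eta\|^2=\eta_1^2-2\eta_1\eta_2+n\eta_2^2=n^2-2n+n=n(n-1)$, $\eta_1^2=n^2$, and $\xi=\frac{-1}{n-1}$; moreover $P_\eta=nP$ where $P=P_{(1,1/n)}$ is the standard twistor operator, and more generally $P_{\eta,\phi}=nP_{(1,1/n),\phi}$, so that $\frac{1}{\eta_1^2}\vert P_{\eta,\phi}s\vert^2=\vert P_{(1,1/n),\phi}s\vert^2$, which the paper abbreviates $\vert P_\phi s\vert^2$ (written $\vert P_{(\tau-n\delta)\varphi}s\vert^2$ in the statement once the weight is substituted).

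Next I would substitute these values termwise into (\ref{weighted-L2-est1}). The coefficient $\frac{(\xi-1)\delta}{\xi}$ becomes $\frac{(\frac{-1}{n-1}-1)\delta}{\frac{-1}{n-1}}=\frac{\frac{-n}{n-1}\delta}{\frac{-1}{n-1}}=n\delta$, and $\frac{(\xi+1)\delta}{\xi}=\frac{\frac{n-2}{n-1}\delta}{\frac{-1}{n-1}}=-(n-2)\delta=(2-n)\delta$, giving the first line $n\delta\int_M\big(\Delta\varphi+(2-n)\delta\vert\nabla\varphi\vert^2\big)\vert s\vert^2e^{2(\delta-\tau)\varphi}$. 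The coefficient $1-\xi$ becomes $1+\frac{1}{n-1}=\frac{n}{n-1}$, producing the second line (using $\frac{1}{\eta_1^2}\vert P_{\eta,(\tau+(\frac1\xi-1)\delta)\varphi}s\vert^2=\vert P_{(\tau+(-(n-1)-1)\delta)\varphi}s\vert^2=\vert P_{(\tau-n\delta)\varphi}s\vert^2$ since $\frac1\xi-1=-(n-1)-1=-n$). The third and fourth boundary lines carry the factors $\xi-1=-\frac{n}{n-1}$ and $(\tau-\delta)(1-\xi)=\frac{n(\tau-\delta)}{n-1}$ respectively, matching (\ref{weighted-L2-est11}). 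Finally the last boundary coefficient $(1-\xi)\tau+(\frac1\xi+\xi-2)\delta$ becomes $\frac{n}{n-1}\tau+\big(-(n-1)-\frac{1}{n-1}-2\big)\delta$; combining, $-(n-1)-2=-(n+1)$ and $-(n+1)-\frac1{n-1}=\frac{-(n+1)(n-1)-1}{n-1}=\frac{-(n^2-1)-1}{n-1}=\frac{-n^2}{n-1}$, so the coefficient is $\frac{n\tau-n^2\delta}{n-1}=\frac{n(\tau-n\delta)}{n-1}$, which is exactly the coefficient in the last line of (\ref{weighted-L2-est11}).

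There is essentially no obstacle here — the proof is a one-line specialization followed by arithmetic simplification of five rational coefficients in $\xi$. The only points requiring a modicum of care are the rescaling invariance that lets me pick the representative $\eta=(n,1)$ on $\mathbb{P}^1$ rather than the primitive $(1,\tfrac1n)$, and keeping straight the identity $\frac1\xi-1=-n$ so that the weight on the twistor term comes out as $(\tau-n\delta)\varphi$; both are immediate. I would therefore present the proof as: "Take $\eta=(n,1)$ in Lemma \ref{weighted-L2-prop}. Then $\|\eta\|^2=n(n-1)$, $\xi=\frac{-1}{n-1}$, and $P_{\eta,\phi}=nP_\phi$; substituting these values and simplifying the coefficients yields (\ref{weighted-L2-est11})."
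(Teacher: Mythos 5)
Your proposal is correct and coincides with the paper's own derivation: the paper presents Corollary \ref{2-dim} as the specialization of Lemma \ref{weighted-L2-prop} to the line $\eta_1=n\eta_2$ (where $\xi=\frac{-1}{n-1}$ and $P_\eta=\eta_1 P$), which is exactly the substitution $\eta=(n,1)$ you make, and your coefficient arithmetic is accurate. Incidentally, your version also restores the factor $\vert s\vert^2$ that is inadvertently dropped from the first integrand in the paper's display (\ref{weighted-L2-est11}).
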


The next identity allows $\delta=0$ in (\ref{weighted-L2-est1}) with the second order term $\Delta\varphi$ being preserved.
\begin{corollary}
\label{weighted-L2-coro}Let $\mathbb{S}$ be a Dirac boundle over an $n$-dimensional Riemannian manifold $(M,g)$ which is compact with smooth boundary, then for all $s\in\Gamma(M,\mathbb{S})$, real functions $\varphi\in C^2(M)$ and constants $\tau,r\in\mathbb{R}(r\neq 0)$,
we have
\begin{eqnarray}
\left\Vert D_{\tau\varphi }^{\ast }s\right\Vert_{2\tau\varphi}^{2}&=& -r\int_M\left(\Delta\varphi+r\vert\nabla\varphi\vert^2\right)\vert s\vert^2e^{-2\tau\varphi} \notag\\ && +\int_M\left(\vert \nabla s-(\tau+r)d\varphi\otimes s\vert^2+\langle s,\mathfrak{R}s\rangle\right)e^{-2\tau\varphi}\notag \\&&-\sum_{\alpha=1}^{n-1}\int_{\partial{M}}\left\langle \nu\cdot e_\alpha\cdot\nabla_{e_\alpha}s,s\right\rangle e^{-2\tau\varphi}+\tau\int_{\partial{M}}\left\langle \nu\cdot \nabla\varphi \cdot s,s\right\rangle e^{-2\tau\varphi}\notag \\ &&+(\tau+r)\int_{\partial{M}}\nu(\varphi)\vert s\vert^2 e^{-2\tau\varphi}.\label{weighted-L2-est2}
\end{eqnarray}
\end{corollary}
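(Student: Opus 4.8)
The plan is to derive (\ref{weighted-L2-est2}) directly from Lemma \ref{weighted-L2-prop} by a suitable choice of parameters that degenerates to the standard twistor operator. Observe that the coefficient $1-\xi$ multiplying the bulk term $\frac{1}{\eta_1^2}\vert P_{\eta,\cdot}s\vert^2 + \langle s,\mathfrak{R}s\rangle$ in (\ref{weighted-L2-est1}) equals $1$ precisely when $\xi=0$, and $\xi(\eta)=0$ is achieved along the line $\eta_1=2\eta_2$ (equivalently $n\eta_2^2-2\eta_1\eta_2=0$ with $\eta_2\neq 0$). So first I would substitute $\eta=(2,1)$ (any nonzero multiple works, since $\xi$ is homogeneous of degree zero and all terms involving $\eta$ in (\ref{weighted-L2-est1}) are scaled away: the $P_{\eta,\cdot}$ term carries a compensating $\frac{1}{\eta_1^2}$, and $\|\eta\|^2$ does not appear once $\xi$ is fixed). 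With $\xi=0$, however, several coefficients in (\ref{weighted-L2-est1}) contain $\frac{1}{\xi}$, so one must take a limit $\xi\to 0$ rather than plug in naively; the point is that these apparent singularities cancel.

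Concretely, I would track each term of (\ref{weighted-L2-est1}) as $\xi\to 0$. The first line is $\frac{(\xi-1)\delta}{\xi}\big(\Delta\varphi + \frac{(\xi+1)\delta}{\xi}\vert\nabla\varphi\vert^2\big)$; writing $r := -\frac{\delta}{\xi}$ (so $\delta = -r\xi\to 0$ while $r$ stays fixed and nonzero) this becomes $(\xi-1)(-r)\big(\Delta\varphi + (\xi+1)(-r)\vert\nabla\varphi\vert^2\big) \to r\big(\Delta\varphi - r\vert\nabla\varphi\vert^2\big)\cdot(-1) = -r\big(\Delta\varphi + r\vert\nabla\varphi\vert^2\big)$, matching the first line of (\ref{weighted-L2-est2}). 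The weight exponent $2(\delta-\tau)\varphi\to -2\tau\varphi$, as required. The bulk twistor term has coefficient $1-\xi\to 1$ and weight parameter $\tau + (\frac{1}{\xi}-1)\delta = \tau - r + r\xi - (-r\xi) \cdot 0$—more carefully, $(\frac{1}{\xi}-1)\delta = (\frac{1}{\xi}-1)(-r\xi) = -r + r\xi \to -r$, so the twistor weight parameter tends to $\tau - r$; wait, I must be careful about the sign convention, and I would recompute to confirm it gives $(\tau+r)$ as in the statement: since $\delta=-r\xi$ we get $\tau + (\frac{1}{\xi}-1)\delta = \tau + (\frac{1}{\xi}-1)(-r\xi) = \tau -r(1-\xi) \to \tau - r$, which suggests the statement intends $r$ with the opposite sign relative to my substitution, i.e. set $r := \delta/\xi$ instead; in any case one free sign choice reconciles it. Then $\frac{1}{\eta_1^2}\vert P_{\eta,\psi\varphi}s\vert^2$ with $\eta=(2,1)$: using (\ref{P1}), $P_{(2,1),\psi\varphi}(e_i,s) = 2\nabla_{e_i}s + e_i\cdot Ds - 2\psi\varphi_i s - \psi e_i\cdot\nabla\varphi\cdot s$, and since $\xi=0$ forces $P_{(2,1)} = 2P_{(1,1/2)}$... actually the relevant fact is $\frac{1}{\eta_1^2}\vert P_\eta s\vert^2 = \vert\nabla s + \frac{\eta_2}{\eta_1}\,\cdot\, Ds\vert^2$-type expression; but the statement writes the bulk term as $\vert\nabla s - (\tau+r)d\varphi\otimes s\vert^2$, with \emph{no} Clifford-$Ds$ piece. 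That forces the $\eta_2 X\cdot Ds$ contribution to vanish, which happens only when $\eta_2=0$, i.e. $\eta=(1,0)$—and indeed $\xi(1,0)=0$ as well! So the correct choice is $\eta=(1,0)$, giving $P_{(1,0)}(X,s)=\nabla_X s$ and $\frac{1}{\eta_1^2}\vert P_{(1,0),\psi\varphi}s\vert^2 = \vert\nabla s - \psi\,d\varphi\otimes s\vert^2$ by (\ref{P1}), with $\psi\to \tau - r$ (or $\tau+r$ after the sign fix). This is the clean route and avoids the $\xi\to 0$ limit being delicate in the $P_{\eta,\phi}$ factor, though one still takes $\delta\to 0$.

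So the streamlined plan is: put $\eta=(1,0)$ in Lemma \ref{weighted-L2-prop} (legitimate since $\eta\neq 0$; note $\|\eta\|^2=1$, $\xi=0$), treat the resulting $\frac{1}{\xi}$-terms by setting $\delta = r\xi$ and letting $\xi\to 0$ with $r$ fixed, and collect the limits of all five lines. The boundary lines: the $\sum_\alpha\int_{\partial M}\langle\nu\cdot e_\alpha\cdot\nabla_{e_\alpha}s,s\rangle$ term has coefficient $\xi-1\to -1$; the $\langle\nu\cdot\nabla\varphi\cdot s,s\rangle$ term has coefficient $(\tau-\delta)(1-\xi)\to\tau$; the $\nu(\varphi)\vert s\vert^2$ term has coefficient $(1-\xi)\tau + (\frac{1}{\xi}+\xi-2)\delta = (1-\xi)\tau + (\frac{1}{\xi}+\xi-2)r\xi = (1-\xi)\tau + r(1+\xi^2-2\xi)\to \tau + r$. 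All the weight exponents $e^{2(\delta-\tau)\varphi}\to e^{-2\tau\varphi}$. Assembling these gives exactly (\ref{weighted-L2-est2}) with the twistor bulk term $\vert\nabla s-(\tau+r)d\varphi\otimes s\vert^2$ once the sign convention for $r$ is fixed consistently (replacing $r$ by $-r$ throughout if needed to match the paper's display). The main obstacle is purely bookkeeping: making sure the three or four coefficients that individually blow up like $1/\xi$ have their singular parts cancel against the $\delta = O(\xi)$ factor, and getting the sign of $r$ and the twistor weight parameter $\tau+r$ to line up with the stated identity; there is no analytic difficulty since everything is an algebraic identity in the parameters for fixed smooth $s,\varphi$ on a compact manifold with boundary.
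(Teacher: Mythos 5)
Your proposal matches the paper's proof in substance: one applies Lemma \ref{weighted-L2-prop} along a one-parameter family in which $\delta$ and $\xi$ both tend to $0$ with the ratio $\delta/\xi = r$ held fixed, and the paper realizes this exactly as you eventually do, by keeping $\eta_1 = 1$, sending $\eta_{2,\ell}\to 0$ (so that $\xi_\ell := \xi(1,\eta_{2,\ell})\to 0$), choosing $\delta_\ell = r\,\xi_\ell$, and then passing to the limit in (\ref{weighted-L2-est1}), using $P_{(1,0),(\tau+r)\varphi}s = \nabla s-(\tau+r)d\varphi\otimes s$. The one phrasing in your write-up that must be corrected is ``put $\eta=(1,0)$ ... and let $\xi\to 0$'': since $\xi$ is a function of $\eta$, fixing $\eta=(1,0)$ forces $\xi\equiv 0$ and the $1/\xi$ coefficients in (\ref{weighted-L2-est1}) are then undefined, so you cannot substitute $\eta=(1,0)$ and then vary $\xi$; you must vary $\eta_2\to 0$ and $\delta\to 0$ simultaneously and take the limit of the resulting identities. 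Two smaller slips worth fixing: the locus $\xi(\eta)=0$ away from $\eta_2=0$ is $\eta_1=\tfrac{n}{2}\eta_2$, not $\eta_1=2\eta_2$; and with your first sign choice $r:=-\delta/\xi$ the first line actually limits to $r\bigl(\Delta\varphi - r\vert\nabla\varphi\vert^2\bigr)$ rather than $-r\bigl(\Delta\varphi + r\vert\nabla\varphi\vert^2\bigr)$. Once you adopt $r=\delta/\xi$ (as the paper does) every coefficient converges to the one in (\ref{weighted-L2-est2}) with no further sign adjustment.
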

\begin{proof}
Fix some sequence $0\neq\delta_\ell\rightarrow 0$ as $\ell\rightarrow \infty$. For sufficiently large $\ell$, one can find $\eta_{2,\ell} \rightarrow 0$ such that $\xi_\ell:=\xi(1,\eta_{2,\ell})=\frac{\delta_\ell}{r}$ for each $\ell$. Lemma \ref{weighted-L2-prop} applied to $\delta=\delta_\ell, \eta=(1,\eta_{2,\ell})$ gives
\begin{eqnarray}
\left\Vert D_{\tau\varphi }^{\ast }s\right\Vert_{2(\tau-\delta_\ell)\varphi}^{2}&=& (\delta_\ell-r)\int_M\left(\Delta\varphi+(\delta_\ell+r)\vert\nabla\varphi\vert^2\right)\vert s\vert^2e^{2(\delta_\ell-\tau)\varphi} \notag\\ && +(1-\frac{\delta_\ell}{r})\int_M\left(\vert P_{(1,\eta_{2,\ell}),(\tau+r-\delta_\ell)\varphi}s\vert^2+\langle s,\mathfrak{R}s\rangle\right)e^{2(\delta_\ell-\tau)\varphi}\notag \\&&+(\frac{\delta_\ell}{r}-1)\sum_{\alpha=1}^{n-1}\int_{\partial{M}}\left\langle \nu\cdot e_\alpha\cdot\nabla_{e_\alpha}s,s\right\rangle e^{2(\delta_\ell-\tau)\varphi}\notag \\ && 
+(\tau-\delta_\ell)(1-\frac{\delta_\ell}{r})\int_{\partial{M}}\left\langle \nu\cdot \nabla\varphi \cdot s,s\right\rangle e^{2(\delta_\ell-\tau)\varphi}\notag \\ &&+\big((1-\frac{\delta_\ell}{r})\tau+r+\frac{\delta_\ell^2}{r}-2\delta_\ell\big)\int_{\partial{M}}\nu(\varphi)\vert s\vert^2 e^{2(\delta_\ell-\tau)\varphi}.\notag
\end{eqnarray}By taking limit as $\ell\rightarrow \infty$, we have 
\begin{eqnarray}
\left\Vert D_{\tau\varphi }^{\ast }s\right\Vert_{2\tau\varphi}^{2}&=& -r\int_M\left(\Delta\varphi+r\vert\nabla\varphi\vert^2\right)\vert s\vert^2e^{-2\tau\varphi} \notag\\ && +\int_M\left(\vert P_{(1,0),(\tau+r)\varphi}s\vert^2+\langle s,\mathfrak{R}s\rangle\right)e^{-2\tau\varphi}\notag \\&&-\sum_{\alpha=1}^{n-1}\int_{\partial{M}}\left\langle \nu\cdot e_\alpha\cdot\nabla_{e_\alpha}s,s\right\rangle e^{-2\tau\varphi}+\tau\int_{\partial{M}}\left\langle \nu\cdot \nabla\varphi \cdot s,s\right\rangle e^{-2\tau\varphi}\notag \\ &&+(\tau+r)\int_{\partial{M}}\nu(\varphi)\vert s\vert^2 e^{-2\tau\varphi}.\notag
\end{eqnarray}
Now the identity (\ref{weighted-L2-est2}) follows from the fact $P_{(1,0),(\tau+r)\varphi}s=\nabla s-(\tau+r)d\varphi\otimes s$(by (\ref{Pen}) and (\ref{P1})).
\end{proof}

\section{Boundary conditions}
In this section, we restrict to compact manifolds with smooth boundary(possibly empty). When the boundary is non-empty, we have to introduce elliptic boundary conditions to make the spectrum of the Dirac operator discrete with finite dimensional eigenspaces(\cite{HMR}). We will recall some well-known boundary conditions which are originally introduced for study of the classical Dirac operator on spin manifolds, the ellipticity can be verified in the same way.

The following Riemannian version of MIT bag boundary condition was first introduced in \cite{HMR}.
\begin{definition}\label{Md}
The MIT bag boundary condition for a section $s\in \Gamma(\partial{M},\mathbb{S})$ means  $$\nu\cdot s = \sqrt{-1}s \ {\rm or} \  -\sqrt{-1}s$$ where $\nu$ is the outward unit vector normal to $\partial{M}$.
\end{definition}

By (\ref{I1'}), it is easy to see that every eigenvalue $\lambda(D)$ under the MIT bag boundary condition has a nonzero imaginary part.

We will treat the boundary term in (\ref{weighted-L2-est1}) by an analogue of the Morrey trick for the $\bar{\partial}$-equation. Let $\{e_i\}_{i=1}^n$ be an adapted orthogonal frame of the boundary, namely $e_n=\nu$ the outward unit normal vector field of the boundary.

For a section $s\in \Gamma(\partial{M},\mathbb{S})$ satisfying the MIT bag boundary condition, say $\nu\cdot s = \sqrt{-1}s$ on $\partial{M}$. By taking tangential derivatives, we know that  $$\nabla_{e_\alpha}\nu\cdot s+\nu\cdot\nabla_{e_\alpha} s=\sqrt{-1}\nabla_{e_\alpha}s$$ holds on $\partial{M}$ for each $\alpha =1,\cdots,n-1$. As a consequence, we get
\begin{eqnarray*}
\sqrt{-1}\sum_{\alpha=1}^{n-1}\langle\nu\cdot e_{\alpha}\cdot\nabla_{e_\alpha} s,s\rangle &=& \sum_{\alpha=1}^{n-1}\langle\nu\cdot e_{\alpha}\cdot (\nabla_{e_\alpha}\nu\cdot s+\nu\cdot\nabla_{e_\alpha} s),-\sqrt{-1}\nu\cdot s\rangle \\ &=& \sqrt{-1}\sum_{\alpha,\beta=1}^{n-1}\langle\nabla_{e_\alpha}\nu,e_{\beta}\rangle\langle e_{\alpha}\cdot e_{\beta}\cdot s,s\rangle -\sqrt{-1}\sum_{\alpha=1}^{n-1}\langle\nu\cdot e_{\alpha}\cdot\nabla_{e_\alpha} s,s\rangle \\ &=& -\sqrt{-1} (n-1)H|s|^2 -\sqrt{-1}\sum_{\alpha=1}^{n-1}\langle\nu\cdot e_{\alpha}\cdot\nabla_{e_\alpha} s,s\rangle,
\end{eqnarray*} which implies
\begin{equation}
\sum_{\alpha=1}^{n-1}\langle\nu\cdot e_{\alpha}\cdot\nabla_{e_\alpha} s,s\rangle=-\frac{(n-1)H}{2}|s|^2, \label{b1}
\end{equation}where $H$ is the mean curvature of $\partial{M}$ w.r.t. the outward unit vector $\nu$, i.e. $H=\frac{1}{n-1}\sum_{\alpha=1}^{n-1}\langle\nabla_{e_\alpha}\nu,e_{\alpha}\rangle$.
The definition of the MIT bag boundary condition also gives
\begin{eqnarray*}
\nu\cdot s = \pm\sqrt{-1}s &\Rightarrow& \nu\cdot(e_{\alpha}\cdot s) = \mp\sqrt{-1}e_{\alpha}\cdot s \\ &\Rightarrow&\langle e_{\alpha}\cdot s,s\rangle=0, 1\leq \alpha\leq n-1,
\end{eqnarray*} which implies 
\begin{equation}
\langle\nu\cdot\nabla\varphi\cdot s, s\rangle = -\nu(\varphi)|s|^2 \label{b2}
\end{equation}

\bigskip

To describe the local boundary condition introduced in \cite{HMZ}, we the need to consider the Dirac bundle structure of the restriction $\bar{\mathbb{S}}$ of $\mathbb{S}$ on $\partial{M}$. $\bar{\mathbb{S}}$ has a natural structure of Dirac bundle over $\partial{M}$(equipped with the induced metric):
\begin{itemize}
\item The metric on $\bar{\mathbb{S}}$ is the restriction of the metric on $\mathbb{S}$.
\item The Clifford multiplication on $\bar{\mathbb{S}}$: $X\ \bar{\cdot} \:= \nu\cdot X\cdot$ for any tangent vector $X$ of $\partial{M}$.
\item The connection on $\bar{\mathbb{S}}$: $\bar{\nabla}_X:=\nabla_X - \frac{1}{2}\nabla_X\nu\ \bar{\cdot}$ where  $X$ is  tangent to $\partial{M}$.
\end{itemize}

\begin{definition}\label{Ld}
Assume that $\bar{\mathbb{S}}=\bar{\mathbb{S}}^+\oplus\bar{\mathbb{S}}^-$ is a  parallel orthogonal decomposition satisfying 
$$T\partial{M} \ \bar{\cdot} \ \bar{\mathbb{S}}^{\pm}\subseteq\bar{\mathbb{S}}
^{\mp},$$ $$\nu \cdot \bar{\mathbb{S}}^{\pm}\subseteq\bar{\mathbb{S}}
^{\mp}.$$The local boundary condition for a section $s$ of $\bar{\mathbb{S}}$ means that either $s^+=0$ or $s^-=0$ holds on $\partial{M}$.
\end{definition}

\begin{remark} The local boundary condition can be equivalently described in terms of the boundary chirality operator as in \cite{HMZ}. 
\end{remark}
The identity (\ref{I1'}) shows that every eigenvalue $\lambda(D)$ under the local boundary condition is real.
Obviously, (\ref{b1}) and (\ref{b2}) still hold for every section satisfying the local boundary condition. We have actually proved the following variants of (\ref{weighted-L2-est1}) and (\ref{weighted-L2-est11}).
\begin{lemma}\label{ML}Suppose the restriction of $s\in\Gamma(M,\mathbb{S})$ to $\partial{M}$ satisfies the MIT bag boundary condition or the local boundary condition, then we have
\begin{eqnarray}
\left\Vert D_{\tau\varphi }^{\ast }s\right\Vert_{2(\tau-\delta)\varphi}^{2}&=& \frac{(\xi-1)\delta}{\xi}\int_M\left(\Delta\varphi+\frac{(\xi+1)\delta}{\xi}\vert\nabla\varphi\vert^2\right)\vert s\vert^2e^{2(\delta-\tau)\varphi} \notag\\ && +(1-\xi)\int_M\left(\frac{1}{\eta_1^2}\vert P_{\eta,(\tau+(\frac{1}{\xi}-1)\delta)\varphi}s\vert^2+\langle s,\mathfrak{R}s\rangle\right)e^{2(\delta-\tau)\varphi}
\notag \\&&+(1-\xi)\int_{\partial{M}}\Big(\frac{(n-1)H}{2}+\frac{\delta\nu(\varphi)}{\xi}\Big)\vert s\vert^2e^{2(\delta-\tau)\varphi}.\label{W1}
\end{eqnarray}
If we choose $\eta_1=n\eta_2\neq 0$, then \begin{eqnarray}
\left\Vert D_{\tau\varphi }^{\ast }s\right\Vert_{2(\tau-\delta)\varphi}^{2}&=& n\delta\int_M\big(\Delta\varphi+(2-n)\delta\nabla\varphi\vert^2\big)e^{2(\delta-\tau)\varphi}
\notag \\&& +\frac{n}{n-1}\int_M\big(\vert P_{(\tau-n\delta)\varphi}s\vert^2+\langle s,\mathfrak{R}s\rangle\big)e^{2(\delta-\tau)\varphi}
\notag \\ && +n\int_{\partial{M}}\left(\frac{H}{2}-\delta\nu(\varphi)\right)\vert s\vert^2e^{2(\delta-\tau)\varphi}.\label{W1'}
\end{eqnarray}
\end{lemma}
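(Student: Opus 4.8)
The plan is to derive Lemma \ref{ML} by specializing the general weighted identity of Lemma \ref{weighted-L2-prop} (and Corollary \ref{2-dim}) to sections whose boundary restriction satisfies the MIT bag or local boundary condition, and then using the two boundary identities (\ref{b1}) and (\ref{b2}) to collapse the three separate boundary integrals in (\ref{weighted-L2-est1}) into a single one.

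First I would record that, by the discussion preceding the lemma, identities (\ref{b1}) and (\ref{b2}) are valid pointwise on $\partial M$ for any section $s$ satisfying either the MIT bag condition (Definition \ref{Md}) or the local boundary condition (Definition \ref{Ld}); the derivation given for the MIT case used only that $\nu\cdot s=\pm\sqrt{-1}s$ and that $\langle e_\alpha\cdot s,s\rangle=0$ for tangential $e_\alpha$, and the analogous facts hold under the local condition because $\bar{\mathbb S}^\pm$ are interchanged by both $\nu\cdot$ and $T\partial M\ \bar\cdot\ $, so $\langle e_\alpha\cdot s,s\rangle=\langle\nu\cdot e_\alpha\ \bar\cdot\ s,s\rangle=0$. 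Thus in (\ref{weighted-L2-est1}) we may substitute
\begin{equation*}
\sum_{\alpha=1}^{n-1}\langle\nu\cdot e_\alpha\cdot\nabla_{e_\alpha}s,s\rangle=-\frac{(n-1)H}{2}|s|^2,\qquad \langle\nu\cdot\nabla\varphi\cdot s,s\rangle=-\nu(\varphi)|s|^2
\end{equation*}
everywhere on $\partial M$.

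Next I would carry out the bookkeeping. The three boundary terms in (\ref{weighted-L2-est1}) are $(\xi-1)\sum_\alpha\int_{\partial M}\langle\nu\cdot e_\alpha\cdot\nabla_{e_\alpha}s,s\rangle e^{2(\delta-\tau)\varphi}$, $(\tau-\delta)(1-\xi)\int_{\partial M}\langle\nu\cdot\nabla\varphi\cdot s,s\rangle e^{2(\delta-\tau)\varphi}$, and $\big((1-\xi)\tau+(\frac1\xi+\xi-2)\delta\big)\int_{\partial M}\nu(\varphi)|s|^2e^{2(\delta-\tau)\varphi}$. After applying (\ref{b1}) the first becomes $(1-\xi)\int_{\partial M}\frac{(n-1)H}{2}|s|^2 e^{2(\delta-\tau)\varphi}$; after applying (\ref{b2}) the second becomes $(\tau-\delta)(\xi-1)\int_{\partial M}\nu(\varphi)|s|^2 e^{2(\delta-\tau)\varphi}$. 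Adding the $\nu(\varphi)$ coefficient from the second and third terms gives $(\tau-\delta)(\xi-1)+(1-\xi)\tau+(\frac1\xi+\xi-2)\delta=(1-\xi)\delta+(\frac1\xi+\xi-2)\delta=(\frac1\xi-1)\delta=\frac{(1-\xi)\delta}{\xi}$, so the net $\nu(\varphi)$ boundary contribution is $(1-\xi)\frac{\delta}{\xi}\int_{\partial M}\nu(\varphi)|s|^2 e^{2(\delta-\tau)\varphi}$. Combining with the mean-curvature term yields exactly the single boundary integral $(1-\xi)\int_{\partial M}\big(\frac{(n-1)H}{2}+\frac{\delta\nu(\varphi)}{\xi}\big)|s|^2 e^{2(\delta-\tau)\varphi}$ of (\ref{W1}), and the interior terms are untouched. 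For (\ref{W1'}) I would instead start from Corollary \ref{2-dim}, i.e.\ set $\eta_1=n\eta_2\neq0$ so that $\xi=\frac{-1}{n-1}$, substitute (\ref{b1}) and (\ref{b2}) into the three boundary integrals of (\ref{weighted-L2-est11}), and check that the coefficients $-\frac{n}{n-1}$, $\frac{n(\tau-\delta)}{n-1}$, $\frac{n(\tau-n\delta)}{n-1}$ recombine: the $H$-term gives $\frac{n}{n-1}\cdot\frac{n-1}{2}=\frac n2$ times $\int_{\partial M}H|s|^2$, and the $\nu(\varphi)$ coefficients sum to $-\frac{n(\tau-\delta)}{n-1}+\frac{n(\tau-n\delta)}{n-1}=\frac{n(\delta-n\delta)}{n-1}=-n\delta$, producing $n\int_{\partial M}\big(\frac H2-\delta\nu(\varphi)\big)|s|^2 e^{2(\delta-\tau)\varphi}$ as claimed.

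The only genuine subtlety — and the step I would treat most carefully — is justifying that (\ref{b1}) and (\ref{b2}) really do hold under the \emph{local} boundary condition as stated, since the text only wrote out the MIT computation; here one must use the defining properties of the boundary Dirac structure $\bar{\mathbb S}=\bar{\mathbb S}^+\oplus\bar{\mathbb S}^-$ (parallelism of the decomposition under $\bar\nabla$, and the fact that both $\nu\cdot$ and tangential Clifford multiplication swap the two summands) to reproduce the identities $\langle e_\alpha\cdot s,s\rangle=0$ and the tangential-derivative identity used to get (\ref{b1}). The remaining work is the elementary algebraic recombination of boundary coefficients described above, which is routine.
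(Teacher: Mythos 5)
Your proposal follows the paper's own route: use \eqref{b1} and \eqref{b2} to collapse the three boundary integrals in \eqref{weighted-L2-est1} (resp.\ \eqref{weighted-L2-est11}) into one, and your coefficient bookkeeping $(\tau-\delta)(\xi-1)+(1-\xi)\tau+(\frac1\xi+\xi-2)\delta=\frac{(1-\xi)\delta}{\xi}$ and $-\frac{n(\tau-\delta)}{n-1}+\frac{n(\tau-n\delta)}{n-1}=-n\delta$ is correct. The one thing I would fix is your justification of \eqref{b2} under the local boundary condition: you write $\langle e_\alpha\cdot s,s\rangle=\langle\nu\cdot e_\alpha\,\bar\cdot\, s,s\rangle=0$, but $\nu\cdot(e_\alpha\,\bar\cdot\, s)=\nu\cdot\nu\cdot e_\alpha\cdot s=-e_\alpha\cdot s$ lies in the \emph{same} summand $\bar{\mathbb S}^{\pm}$ as $s$ (two swaps return you to where you started), so orthogonality of $\bar{\mathbb S}^{+}$ and $\bar{\mathbb S}^{-}$ gives you nothing here, and the first equality is also off by a sign. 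In fact $\langle e_\alpha\cdot s,s\rangle=0$ is neither needed nor obviously true under the local condition; expanding $\nu\cdot\nabla\varphi\cdot s=\sum_\alpha\varphi_\alpha\,\nu\cdot e_\alpha\cdot s-\nu(\varphi)s$ shows that \eqref{b2} requires precisely $\langle\nu\cdot e_\alpha\cdot s,s\rangle=\langle e_\alpha\,\bar\cdot\, s,s\rangle=0$, which \emph{does} follow directly since a single tangential $\bar\cdot$ sends $\bar{\mathbb S}^{\pm}$ to $\bar{\mathbb S}^{\mp}$. (Likewise for \eqref{b1}: by parallelism of the splitting and $T\partial M\,\bar\cdot\,\bar{\mathbb S}^{\pm}\subseteq\bar{\mathbb S}^{\mp}$ one has $\langle\bar D s,s\rangle=0$ pointwise, which combined with the Gauss-type relation between $\nabla$ and $\bar\nabla$ reproduces \eqref{b1}.) With this correction your argument is complete and coincides with the paper's.
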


\bigskip

The APS(Atiyah-Patodi-Singer) boundary condition plays an important role in the index theory for the Dirac operator, the modified APS condition was introduced in \cite{HMR}. Chen generalized the APS boundary condition in \cite{C}.
\begin{definition}\label{Ad}
The $b$-APS(Atiyah-Patodi-Singer) boundary condition for $s\in L^2(\partial{M},\bar{\mathbb{S}})$ means that $s$ belongs to the space spanned by eigensections with eigenvalues $\leq b$ of $\bar{D}=\sum_{\alpha=1}^{n-1}e_{\alpha}\ \bar{\cdot} \ \bar{\nabla}_{\alpha}:L^2(\partial{M},\bar{\mathbb{S}})\rightarrow L^2(\partial{M},\bar{\mathbb{S}})$ where $b\in\mathbb{R}$.
\end{definition}

Since \begin{eqnarray}
\sum_{\alpha=1}^{n-1}\nu\cdot e_{\alpha}\cdot\nabla_{\alpha} &=& \bar{D} + \frac{1}{2}\sum_{\alpha=1}^{n-1}\nu\cdot e_{\alpha}\cdot \nu\cdot\nabla_{\alpha}\nu\cdot \notag\\ &=& \bar{D} + \frac{1}{2}\sum_{\alpha=1}^{n-1}e_{\alpha}\cdot \nabla_{\alpha}\nu\cdot \notag\\ &=& \bar{D} + \frac{1}{2}\sum_{\alpha,\beta=1}^{n-1}\langle\nabla_{\alpha}\nu,e_{\beta}\rangle e_{\alpha}\cdot e_{\beta}\cdot  \notag\\ &=& \bar{D}-\frac{n-1}{2}H \label{DD},
\end{eqnarray} and $$\nu\cdot\nabla\varphi\cdot =\bar{\nabla}\varphi\ \bar{\cdot} -\nu(\varphi),$$where $\varphi\in C^1(M)$ and $\bar{\nabla}\varphi$ is the gradient of $\varphi|_{\partial{M}}$, we have for every $s\in \Gamma(M,\mathbb{S})$ 
 \begin{eqnarray}\sum_{\alpha=1}^{n-1}\left\langle\nu\cdot e_{\alpha}\cdot\nabla_{\alpha}s,s\right\rangle-(\tau-\delta)\left\langle\nu\cdot\nabla\varphi\cdot s,s\right\rangle = \left\langle\bar{D}^*_{(\tau-\delta)\varphi}s,s\right\rangle + \left((\tau-\delta)\nu(\varphi)-\frac{n-1}{2}H\right)|s|^2.\label{D}
 \end{eqnarray}

\begin{lemma}\label{APS1}Suppose $s\in\Gamma(M,\mathbb{S})$ has the property that $e^{(\delta-\tau)\varphi}s|_{\partial{M}}$ satisfies the $b$-APS boundary condition for some $b\in\mathbb{R}$, then we have
\begin{eqnarray}
\left\Vert D_{\tau\varphi }^{\ast }s\right\Vert_{2(\tau-\delta)\varphi}^{2}&\geq& \frac{(\xi-1)\delta}{\xi}\int_M\left(\Delta\varphi+\frac{(\xi+1)\delta}{\xi}\vert\nabla\varphi\vert^2\right)\vert s\vert^2e^{2(\delta-\tau)\varphi} \notag\\ && +(1-\xi)\int_M\left(\frac{1}{\eta_1^2}\vert P_{\eta,(\tau+(\frac{1}{\xi}-1)\delta)\varphi}s\vert^2+\langle s,\mathfrak{R}s\rangle\right)e^{2(\delta-\tau)\varphi}\notag \\&& +(1-\xi)\int_{\partial{M}}\left(\frac{n-1}{2}H-b+\frac{\delta}{\xi}\nu(\varphi)\right)\vert s\vert^2 e^{2(\delta-\tau)\varphi}\label{WW2}.
\end{eqnarray}
Moreover, the equality holds if and only if  $s|_{\partial{M}} = 0$ or $e^{(\delta-\tau)\varphi}s|_{\partial{M}}$ is a $b$-eigensection of $\bar{D}$. 
If we choose $\eta_1=n\eta_2\neq 0$, then \begin{eqnarray}
\left\Vert D_{\tau\varphi }^{\ast }s\right\Vert_{2(\tau-\delta)\varphi}^{2}&\geq& n\delta\int_M\big(\Delta\varphi+(2-n)\delta\nabla\varphi\vert^2\big)e^{2(\delta-\tau)\varphi}
\notag \\&& +\frac{n}{n-1}\int_M\big(\vert P_{(\tau-n\delta)\varphi}s\vert^2+\langle s,\mathfrak{R}s\rangle\big)e^{2(\delta-\tau)\varphi}
\notag \\ && +n\int_{\partial{M}}\left(\frac{H}{2}-\frac{b}{n-1}-\delta\nu(\varphi)\right)\vert s\vert^2e^{2(\delta-\tau)\varphi}.\label{WW2'}
\end{eqnarray}
\end{lemma}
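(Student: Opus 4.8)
The plan is to start from the master identity in Lemma \ref{weighted-L2-prop}, namely the equation (\ref{weighted-L2-est1}), and rewrite its three boundary integrals into the single quantity appearing on the boundary in (\ref{D}). Recall that (\ref{weighted-L2-est1}) contains the boundary contributions
$(\xi-1)\sum_{\alpha}\int_{\partial M}\langle\nu\cdot e_\alpha\cdot\nabla_{e_\alpha}s,s\rangle e^{2(\delta-\tau)\varphi}$,
$(\tau-\delta)(1-\xi)\int_{\partial M}\langle\nu\cdot\nabla\varphi\cdot s,s\rangle e^{2(\delta-\tau)\varphi}$,
and $((1-\xi)\tau+(\tfrac1\xi+\xi-2)\delta)\int_{\partial M}\nu(\varphi)|s|^2 e^{2(\delta-\tau)\varphi}$. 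Factoring $(1-\xi)$ out of the first two and using the algebraic identity $(1-\xi)\tau+(\tfrac1\xi+\xi-2)\delta = (1-\xi)(\tau-\delta)+(\tfrac1\xi-1)\delta$, the combined boundary term becomes $(1-\xi)$ times $-\big[\sum_\alpha\langle\nu\cdot e_\alpha\cdot\nabla_{e_\alpha}s,s\rangle-(\tau-\delta)\langle\nu\cdot\nabla\varphi\cdot s,s\rangle\big]e^{2(\delta-\tau)\varphi}$ plus the left-over $(\tfrac1\xi-1)\delta\int_{\partial M}\nu(\varphi)|s|^2e^{2(\delta-\tau)\varphi}$. This bookkeeping is routine; care is only needed with signs.

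Next I would substitute (\ref{D}) for the bracketed expression. Writing $t:=e^{(\delta-\tau)\varphi}s$ and noting $|t|^2=|s|^2e^{2(\delta-\tau)\varphi}$, the $\bar D$-part of (\ref{D}) contributes $-(1-\xi)\int_{\partial M}\langle\bar D^*_{(\tau-\delta)\varphi}s,s\rangle e^{2(\delta-\tau)\varphi}$, which by definition of the weighted adjoint equals $-(1-\xi)\int_{\partial M}\langle\bar D t,t\rangle$, while the remaining scalar part of (\ref{D}) contributes $-(1-\xi)\int_{\partial M}\big((\tau-\delta)\nu(\varphi)-\tfrac{n-1}{2}H\big)|t|^2$. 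Collecting this with the left-over $\nu(\varphi)$ term above, the total boundary expression is
$(1-\xi)\int_{\partial M}\Big(\tfrac{n-1}{2}H+\tfrac{\delta}{\xi}\nu(\varphi)\Big)|t|^2 - (1-\xi)\int_{\partial M}\langle\bar D t,t\rangle$,
where one uses $(\tfrac1\xi-1)\delta-(\tau-\delta)+\tau=\tfrac\delta\xi$ after the $\tau$'s cancel; again purely algebraic.

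Now the $b$-APS hypothesis enters: since $t=e^{(\delta-\tau)\varphi}s|_{\partial M}$ lies in the span of eigensections of $\bar D$ with eigenvalue $\le b$, we have $\langle\bar D t,t\rangle\le b|t|^2$ pointwise-integrated, i.e. $\int_{\partial M}\langle\bar D t,t\rangle\le b\int_{\partial M}|t|^2$. Provided $1-\xi\ge 0$, which holds because $\xi\le 1$ always (as noted after Corollary \ref{2-dim}), dropping $-(1-\xi)\int_{\partial M}\langle\bar D t,t\rangle$ in favour of $-(1-\xi)b\int_{\partial M}|t|^2$ only decreases the right-hand side, giving the stated inequality (\ref{WW2}) with boundary term $(1-\xi)\int_{\partial M}\big(\tfrac{n-1}{2}H-b+\tfrac\delta\xi\nu(\varphi)\big)|s|^2e^{2(\delta-\tau)\varphi}$. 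The equality discussion follows from the spectral theorem: equality in $\langle\bar D t,t\rangle\le b|t|^2$ forces $t$ to be a $b$-eigensection of $\bar D$ (or $t\equiv 0$), which is exactly the claimed condition; I would also note that when $\xi=1$ there is nothing to prove since both sides reduce to the unweighted identity's content and the boundary term vanishes.

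Finally, for (\ref{WW2'}) I would simply specialize $\eta_1=n\eta_2\ne 0$, for which $\xi=\tfrac{-1}{n-1}$ (the minimum value of $\xi$, reached along $\eta_1=n\eta_2$), so $1-\xi=\tfrac{n}{n-1}$ and $\tfrac{\delta}{\xi}=-(n-1)\delta$, and $P_\eta=\eta_1 P$ turns $\tfrac{1}{\eta_1^2}|P_{\eta,\ast}s|^2$ into $|P_{\ast}s|^2$; plugging these into (\ref{WW2}) and multiplying through, the in-bulk terms match Corollary \ref{2-dim} and the boundary term becomes $n\int_{\partial M}\big(\tfrac H2-\tfrac{b}{n-1}-\delta\nu(\varphi)\big)|s|^2e^{2(\delta-\tau)\varphi}$, as stated. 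The only genuinely substantive point in the whole argument is the passage from the identity to an inequality via the spectral bound for $\bar D$ together with the sign of $1-\xi$; everything else is algebraic rearrangement of (\ref{weighted-L2-est1}) and (\ref{D}), so the main obstacle is purely organizational — keeping the many $\tau,\delta,\xi,\nu(\varphi)$ coefficients straight and verifying the $\tau$-cancellation that makes the $\nu(\varphi)$-coefficient collapse to $\delta/\xi$.
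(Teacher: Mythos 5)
Your proposal is correct and follows exactly the paper's route: substitute \eqref{D} into \eqref{weighted-L2-est1}, collect the boundary terms into $-(1-\xi)\int_{\partial M}\langle\bar D t,t\rangle+(1-\xi)\int_{\partial M}\big(\tfrac{n-1}{2}H+\tfrac{\delta}{\xi}\nu(\varphi)\big)|t|^2$ with $t=e^{(\delta-\tau)\varphi}s$, and apply the spectral bound $\int_{\partial M}\langle\bar D t,t\rangle\le b\int_{\partial M}|t|^2$ together with $1-\xi>0$. The algebra (the $\tau$-cancellation giving $\delta/\xi$, the specialization $\xi=-1/(n-1)$) and the equality discussion all check out, so the only difference is that you spell out the bookkeeping the paper leaves implicit.
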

\begin{proof}
Substituting (\ref{D}) into (\ref{weighted-L2-est1}), the boundary term is given by 
\begin{eqnarray}
\int_{\partial{M}}\left\langle\bar{D}_{(\tau-\delta)\varphi}^*s,s\right\rangle e^{-2(\tau-\delta)\varphi} =\int_{\partial{M}}\left\langle\bar{D}(e^{(\delta-\tau)\varphi}s),e^{(\delta-\tau)\varphi}s\right\rangle\leq b\int_{\partial{M}}\vert s\vert^2e^{-2(\tau-\delta)\varphi} \notag
\end{eqnarray}where we have used, in the last inequality, the assumption that $e^{(\delta-\tau)\varphi}s|_{\partial{M}}$ satisfies the $b$-APS boundary condition. The proof is thus complete.
\end{proof}

\begin{definition}\label{MAd}
The modified $b$-APS boundary condition for $s\in L^2(\partial{M},\bar{\mathbb{S}})$ means that $s+\nu\cdot s$ or  $s-\nu\cdot s$ satisfies the $b$-APS boundary condition where $b\in\mathbb{R}$.
\end{definition}

Again, it follows from (\ref{I1'}) that for $b\leq 0$ every eigenvalue $\lambda(D)$ under the (modified) $b$-APS boundary condition is real.

Assume $t\in L^2(\partial{M},\bar{\mathbb{S}})$ satisfies the modified $b$-APS boundary condition. 

When $b\geq 0$, we split $t$ according to the spectral decomposition of $\bar{D}$ $$t=t_{<-b} + t_{[-b,b]} + t_{>b}\ {\rm on} \ \partial{M}$$ where $t_{<b}, t_{(-b,b)},  t_{>b} \in L^2(\partial{M},\bar{\mathbb{S}})$ lies in the space spanned by the eigensections of $\bar{D}$ with eigenvalues in $(-\infty,\-b), [-b,b]$ and $(b,+\infty)$ respectively.

By  (\ref{DD}), we have \begin{equation}
\bar{D}\cdot\nu = -\nu\cdot\bar{D}\label{AC}
\end{equation} from which it follows that the modified $b$-APS boundary condition for $t$ gives $$t_{>b}\pm\nu\cdot t_{<-b} \equiv 0\ {\rm on} \ \partial{M}.$$ Now we can estimate the boundary integral
\begin{eqnarray*}
\left(\bar{D}t,t\right)_{\partial{M}} &=& \left(\bar{D}t_{<-b} ,t_{<-b} \right)_{\partial{M}} + \left(\bar{D}t_{[-b,b]} ,t_{[-b,b]}\right)_{\partial{M}}  + \left(\bar{D}t_{>b} ,t_{>b}\right)_{\partial{M}}\\
&=&  \left(\bar{D}t_{<-b} ,t_{<-b} \right)_{\partial{M}} + \left(\bar{D}t_{[-b,b]} ,t_{[-b,b]}\right)_{\partial{M}}\\ && + \left(\bar{D}(\mp\nu\cdot t_{<-b}) ,\mp\nu\cdot t_{<-b}\right)_{\partial{M}}\\ 
&=& \left(\bar{D}t_{[-b,b]} ,t_{[-b,b]}\right)_{\partial{M}} \leq b\Vert t\Vert^2_{\partial{M}}.
\end{eqnarray*}

When $b<0$, we split $t$ as $$t=t_{\leq b} + t_{(b,-b)} + t_{\geq -b}\ {\rm on} \ \partial{M}$$which implies $$ t_{(b,-b)}\equiv 0, t_{\geq -b}\pm\nu\cdot t_{\leq b}\equiv 0$$ and therefore
\begin{eqnarray*}
\left(\bar{D}t,t\right)_{\partial{M}} &=& \left(\bar{D}t_{\leq b} ,t_{\leq b} \right)_{\partial{M}} + \left(\bar{D}t_{\geq -b} ,t_{\geq -b}\right)_{\partial{M}} \\
&=& \left(\bar{D}t_{\leq b} ,t_{\leq b} \right)_{\partial{M}} + \left(\bar{D}(\mp\nu\cdot t_{\leq b}) ,\mp\nu\cdot t_{\leq b}\right)_{\partial{M}} \\ &=& 0.
\end{eqnarray*}

To summarize, we have 
\begin{equation}
\int_{\partial{M}}\langle \bar{D}t,t\rangle  = \begin{cases}
     0  & \text{if } b\leq 0,\\  \text{} \\
     \left(\bar{D}t_{[-b,b]} ,t_{[-b,b]}\right)_{\partial{M}} \leq b\int_{\partial{M}}\vert t\vert^2 & \text{if } b>0.
    \end{cases} \label{maps}
\end{equation} 
for any $t\in L^2(\partial{M},\bar{\mathbb{S}})$ satisfying the modified $b$-APS boundary condition. With (\ref{maps}) at hand, we can prove the following estimate by the same method for Lemma \ref{APS1}.

\begin{lemma}\label{mAPS}Suppose $s\in\Gamma(M,\mathbb{S})$ has the property that $e^{(\delta-\tau)\varphi}s|_{\partial{M}}$ satisfies the modified $b$-APS boundary condition for some $b\in\mathbb{R}$. If $b\leq 0$,  
\begin{eqnarray}
\left\Vert D_{\tau\varphi }^{\ast }s\right\Vert_{2(\tau-\delta)\varphi}^{2}&=& \frac{(\xi-1)\delta}{\xi}\int_M\left(\Delta\varphi+\frac{(\xi+1)\delta}{\xi}\vert\nabla\varphi\vert^2\right)\vert s\vert^2e^{2(\delta-\tau)\varphi} \notag\\ && +(1-\xi)\int_M\left(\frac{1}{\eta_1^2}\vert P_{\eta,(\tau+(\frac{1}{\xi}-1)\delta)\varphi}s\vert^2+\langle s,\mathfrak{R}s\rangle\right)e^{2(\delta-\tau)\varphi}\notag \\&& +(1-\xi)\int_{\partial{M}}\left(\frac{n-1}{2}H+\frac{\delta}{\xi}\nu(\varphi)\right)\vert s\vert^2 e^{2(\delta-\tau)\varphi}\label{WWW3}.
\end{eqnarray}
If $b>0$, \begin{eqnarray}
\left\Vert D_{\tau\varphi }^{\ast }s\right\Vert_{2(\tau-\delta)\varphi}^{2}&\geq& \frac{(\xi-1)\delta}{\xi}\int_M\left(\Delta\varphi+\frac{(\xi+1)\delta}{\xi}\vert\nabla\varphi\vert^2\right)\vert s\vert^2e^{2(\delta-\tau)\varphi} \notag\\ && +(1-\xi)\int_M\left(\frac{1}{\eta_1^2}\vert P_{\eta,(\tau+(\frac{1}{\xi}-1)\delta)\varphi}s\vert^2+\langle s,\mathfrak{R}s\rangle\right)e^{2(\delta-\tau)\varphi}\notag \\&& +(1-\xi)\int_{\partial{M}}\left(\frac{n-1}{2}H-b+\frac{\delta}{\xi}\nu(\varphi)\right)\vert s\vert^2 e^{2(\delta-\tau)\varphi}\label{WW3}.
\end{eqnarray}
Moreover, the equality holds if and only if  $s|_{\partial{M}} = 0$ or $e^{(\delta-\tau)\varphi}s|_{\partial{M}}$ is a $b$-eigensection of $\bar{D}$. 
\end{lemma}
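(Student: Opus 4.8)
The plan is to run the proof of Lemma \ref{APS1} essentially verbatim, substituting the spectral estimate (\ref{maps}) for the modified condition in place of the plain $b$-APS bound used there. First I would start from the identity (\ref{weighted-L2-est1}) of Lemma \ref{weighted-L2-prop} and leave its two interior integrals untouched; all the work lies in its last three boundary integrals. Using (\ref{D}) to merge the integrand $(\xi-1)\langle\nu\cdot e_\alpha\cdot\nabla_{e_\alpha}s,s\rangle$ with $(\tau-\delta)(1-\xi)\langle\nu\cdot\nabla\varphi\cdot s,s\rangle$, the three boundary integrals collapse into
\[
(\xi-1)\int_{\partial M}\left\langle \bar D^{\ast}_{(\tau-\delta)\varphi}s,s\right\rangle e^{2(\delta-\tau)\varphi} + (1-\xi)\int_{\partial M}\left(\tfrac{n-1}{2}H + \tfrac{\delta}{\xi}\nu(\varphi)\right)|s|^2 e^{2(\delta-\tau)\varphi},
\]
once one checks that the coefficient of $\nu(\varphi)|s|^2$ reduces to $(1-\xi)\delta/\xi$ and the coefficient of $H|s|^2$ to $(1-\xi)(n-1)/2$ (the $\tau\,\nu(\varphi)$ contributions cancel). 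Writing $t := e^{(\delta-\tau)\varphi}s|_{\partial M}$ and using the boundary analogue $\bar D^{\ast}_\psi = e^{\psi}\bar D\,e^{-\psi}$ of (\ref{weighted-Dirac-relation}), the first integral above equals $(\xi-1)\int_{\partial M}\langle\bar D t,t\rangle$.

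Next I would feed in (\ref{maps}). If $b\le 0$ it gives $\int_{\partial M}\langle\bar D t,t\rangle = 0$, so that term disappears and what remains is precisely the asserted identity (\ref{WWW3}). If $b>0$ it gives $\int_{\partial M}\langle\bar D t,t\rangle = (\bar D t_{[-b,b]},t_{[-b,b]})_{\partial M} \le b\int_{\partial M}|t|^2 = b\int_{\partial M}|s|^2 e^{2(\delta-\tau)\varphi}$; since $\eta_1\ne 0$ forces $\xi<1$, i.e. $\xi-1<0$, multiplying reverses the inequality and produces the bound (\ref{WW3}) with the $-b$ shift in the boundary integrand.

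Finally, for the equality statement (which has content only when $b>0$, as $b\le 0$ already yields an identity), I would note that the sole inequality invoked in the $b>0$ case is $(\bar D t_{[-b,b]},t_{[-b,b]})_{\partial M}\le b\int_{\partial M}|t|^2$. The discussion preceding the lemma, together with the anticommutation (\ref{AC}), already shows that the contributions of the $>b$ and $<-b$ spectral parts of $t$ to $\langle\bar D t,t\rangle$ sum to zero; hence equality forces $t_{<-b}=t_{>b}=0$ and $t_{[-b,b]}$ to lie in the $b$-eigenspace of $\bar D$, that is, $t$ is either $0$ (equivalently $s|_{\partial M}=0$) or a $b$-eigensection of $\bar D$. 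The converse is immediate, since either possibility turns that inequality into an equality.

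The only real obstacle is bookkeeping: one must carefully collect the three boundary integrands of (\ref{weighted-L2-est1}) through (\ref{D}) and verify the two coefficient simplifications stated above, while keeping the sign $1-\xi>0$ straight so that the final estimate points in the correct direction. No geometric input beyond (\ref{D}), (\ref{maps}) and (\ref{AC}) — all established above — is needed.
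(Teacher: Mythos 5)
Your proposal is correct and follows exactly the route the paper takes: the paper's proof of Lemma~\ref{mAPS} consists of the single remark that it is "the same method for Lemma~\ref{APS1}" with (\ref{maps}) in place of the plain $b$-APS spectral bound, which is precisely what you have carried out, including the sign check $\xi-1<0$ and the characterization of equality via the spectral decomposition of $t$.
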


\section{Eigenvalue estimates.}

When the underlying manifold is of dimension $2,$ we can estimate eigenvalues of the Dirac operator from below in terms of curvature integrals.

\begin{theorem}\label{T1}
Let $\mathbb{S}$ be a Dirac bundle over a compact Riemannian manifold $\left( M,g\right)$ of dimension $2$ and $D$ be the Dirac
operator. We have the following estimate $$\vert\lambda(D)\vert^2\geq \frac{1}{{\rm Vol}(M)}\left(2\int_M \kappa + \int_{\partial{M}} H.\right)$$where $\lambda(D)$ is an arbitrary eigenvalue under the MIT bag boundary condition or the local boundary condition when the boundary is non-empty($\partial{M}=\emptyset$ is also allowed). The equality holds if and only if $\partial{M}$ is minimal and there is a nontrivial section $s\in\Gamma(M,\mathbb{S})$ satisfying the corresponding boundary condition such that $\mathfrak{R}s=\kappa s$ and $\nabla_Xs +\frac{\lambda}{2}X\cdot s=0$ for all tangent vector $X$ and some constant $\lambda\in\mathbb{R}$, moreover we also have $\kappa = \frac{\lambda^2}{2}$ in this case.

\end{theorem}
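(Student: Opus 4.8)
The plan is to apply the weighted identity $(\ref{W1'})$ of Lemma \ref{ML} with $n=2$ and to choose the weight so that the $L^2$-density $|s|^2e^{2(\delta-\tau)\varphi}$ is identically $1$; the volume then appears by itself. Let $s\in\Gamma(M,\mathbb S)$ be a nontrivial eigensection, $Ds=\lambda s$, satisfying the prescribed boundary condition (or $\partial M=\emptyset$). Assuming first that $s$ is nowhere zero, take $\tau=0$, $\delta=1$ and $\varphi=-\log|s|$ in $(\ref{W1'})$. Since $n=2$ the term $(2-n)\delta|\nabla\varphi|^2$ vanishes and $e^{2(\delta-\tau)\varphi}=|s|^{-2}$, so $|s|^2e^{2(\delta-\tau)\varphi}\equiv1$ on $M$. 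The left-hand side $\|D^*_0 s\|^2_{-2\varphi}$ equals $\int_M|Ds|^2e^{2\varphi}=|\lambda|^2\int_M|s|^2e^{2\varphi}=|\lambda|^2\,\mathrm{Vol}(M)$. On the right, the interior second-order term $2\int_M\Delta\varphi$ equals $2\int_{\partial M}\nu(\varphi)$ by the divergence theorem, which cancels against the $-2\int_{\partial M}\nu(\varphi)$ hidden in the boundary term and leaves exactly $\int_{\partial M}H$; the twistor term $2\int_M|P_{-2\varphi}s|^2e^{2\varphi}$ is nonnegative and is discarded, while $2\int_M\langle s,\mathfrak Rs\rangle e^{2\varphi}\ge 2\int_M\kappa$ by the definition of $\kappa$. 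Hence $|\lambda|^2\,\mathrm{Vol}(M)\ge 2\int_M\kappa+\int_{\partial M}H$, which is the asserted bound; nothing here uses reality of $\lambda$, so the estimate covers the MIT bag and the local conditions as well as the closed case.

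To remove the hypothesis that $s$ is nowhere zero I would run the same computation with the smooth weight $\varphi_\varepsilon=-\tfrac12\log(|s|^2+\varepsilon)$. One integration by parts shows that the interior second-order term plus the boundary term now equals $\int_{\partial M}H\,\frac{|s|^2}{|s|^2+\varepsilon}+\int_M\frac{\varepsilon\,|\nabla(|s|^2)|^2}{(|s|^2+\varepsilon)^3}$, whose second summand is $\ge 0$; after discarding the twistor term and estimating $\langle s,\mathfrak Rs\rangle\ge\kappa|s|^2$ this gives $|\lambda|^2\int_M\frac{|s|^2}{|s|^2+\varepsilon}\ge 2\int_M\frac{\kappa|s|^2}{|s|^2+\varepsilon}+\int_{\partial M}H\frac{|s|^2}{|s|^2+\varepsilon}$. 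Letting $\varepsilon\downarrow0$, and using that the nodal set of a nontrivial eigensection has measure zero in $M$ and in $\partial M$ (by the weak unique continuation property of Dirac-type operators and of the elliptic boundary problems at hand), dominated convergence yields the bound; the case $\partial M=\emptyset$ is identical with the boundary integrals suppressed.

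For the equality statement I would trace the estimate backwards: equality forces $\mathfrak Rs=\kappa s$ everywhere and the vanishing of the weighted twistor, i.e.\ $\nabla_X s=-\tfrac\lambda2 X\cdot s+2X(\log|s|)\,s+X\cdot\nabla(\log|s|)\cdot s$ for all $X$. Plugging this expression for $\nabla_X s$ into the opening Lemma (with $a=-\tfrac\lambda2$, $b=1$, $c=2$, $\phi=\log|s|$) and equating with $\mathfrak Rs=\kappa s$ shows $\nabla\phi\cdot s=\mu s$ for a scalar function $\mu$; applying Clifford multiplication by $\nabla\phi$ gives $\mu^2=-|\nabla\phi|^2$, so when $\lambda$ is real and nonzero (the local and closed cases) $\mu$ is real and hence $\mu\equiv0$, forcing $\nabla|s|\equiv0$. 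Thus $|s|$ is a nonzero constant, the weighted twistor equation collapses to $\nabla_X s+\tfrac\lambda2 X\cdot s=0$, the opening Lemma yields $\kappa=\tfrac{\lambda^2}2$, and minimality of $\partial M$ follows from $(\ref{b1})$ by substituting $\nabla_{e_\alpha}s=-\tfrac\lambda2 e_\alpha\cdot s$ and using $\langle\nu\cdot s,s\rangle=0$ for the local condition, which gives $H|s|^2\equiv0$ on $\partial M$. For the MIT bag condition, restricting the weighted twistor equation to $X=\nu$ and using $\nu\cdot s=\pm\sqrt{-1}\,s$ together with the skew-adjointness of boundary Clifford multiplication forces $\mathrm{Im}(\lambda)\,|s|^2\equiv0$ on $\partial M$; as MIT eigenvalues are non-real this gives $s|_{\partial M}=0$, hence $s\equiv0$ by unique continuation, so equality does not occur there. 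The converse is immediate: if $\partial M$ is minimal and such an $s$ exists, then $|s|$ is constant, $P_{(1,\frac12)}s=0$, $\mathfrak Rs=\kappa s$, and $2\int_M\kappa+\int_{\partial M}H=\lambda^2\,\mathrm{Vol}(M)$, so every inequality above becomes an equality.

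The step I expect to be the main obstacle is this equality analysis — in particular extracting $|s|\equiv\mathrm{const}$ from the two equality conditions and disposing of the degenerate sub-case $\lambda=0$ — together with the bookkeeping needed to make the $\varepsilon\to0$ limit and its equality case rigorous; by contrast, once the weight $\varphi=-\log|s|$ is in place the inequality itself is a one-line substitution into $(\ref{W1'})$.
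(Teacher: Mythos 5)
Your inequality derivation is correct but takes a genuinely different route from the paper. You use the \emph{section-dependent} weight $\varphi=-\log|s|$ (regularized to $\varphi_\varepsilon=-\tfrac12\log(|s|^2+\varepsilon)$), whereas the paper's proof solves the linear Neumann problem $\tfrac12\Delta\varphi+\kappa=\tfrac{1}{\mathrm{Vol}(M)}\left(\int_M\kappa+\tfrac12\int_{\partial M}H\right)$ on $M$, $\nu(\varphi)=H$ on $\partial M$, for a fixed $C^2$ weight (independent of $s$), and then applies $(\ref{W1'})$ with $\tau=1,\delta=\tfrac12$ to the eigensection $e^{-\varphi}s$. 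With the paper's weight the boundary integrand $\tfrac H2-\tfrac12\nu(\varphi)$ vanishes identically and $\Delta\varphi+2\kappa$ reduces to the desired constant, so there is no regularization and no need for nodal-set arguments; the cost is invoking solvability of a Neumann problem (including checking the compatibility integral, which does hold). Your computation of the integration by parts (the identity
$2\int_M\Delta\varphi_\varepsilon\tfrac{|s|^2}{|s|^2+\varepsilon}-2\int_{\partial M}\nu(\varphi_\varepsilon)\tfrac{|s|^2}{|s|^2+\varepsilon}=\int_M\tfrac{\varepsilon|\nabla(|s|^2)|^2}{(|s|^2+\varepsilon)^3}$) is correct, and the inequality then follows once the nodal sets are handled.

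There are, however, genuine gaps. (i) To pass $\int_{\partial M}H\,\tfrac{|s|^2}{|s|^2+\varepsilon}\to\int_{\partial M}H$ you need the \emph{boundary} nodal set $\{s=0\}\cap\partial M$ to have measure zero; strong unique continuation for $D^2$ gives the interior fact, but the boundary statement is not standard for general Dirac bundles under these boundary conditions and needs a separate argument — without it, if $H$ changes sign the limit may fail to dominate $\int_{\partial M}H$. (ii) In the equality analysis the correction term $\int_M\tfrac{\varepsilon|\nabla(|s|^2)|^2}{(|s|^2+\varepsilon)^3}$ does \emph{not} tend to $0$: near a generic isolated interior zero it contributes a fixed positive amount as $\varepsilon\to 0$. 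This actually works in your favor (equality forces $s$ to be nowhere zero, after which $\varphi=-\log|s|$ is an honest $C^2$ weight), but you must say this explicitly before writing down the weighted twistor equation with $\phi=\log|s|$. (iii) Your argument extracting $\nabla\log|s|\equiv0$ divides by $\lambda$, so the sub-case $\lambda=0$ remains open — though the paper's own proof has the same implicit restriction, since it passes from $(\ref{RI})$ to $(\lambda-\bar\lambda)s+\nabla\varphi\cdot s=0$ by cancelling a factor of $\lambda$. Finally, note that the paper's step $(\lambda-\bar\lambda)^2+|\nabla\varphi|^2=0\Rightarrow\nabla\varphi=0$ secretly uses that $|\nabla\varphi|$ is then constant on a compact manifold and hence zero at a critical point of $\varphi$; your version, which gets $|\nabla\log|s||$ directly from $\mu^2=-|\nabla\log|s||^2$ with $\mu$ real, is actually cleaner on this point when $\lambda\in\mathbb{R}\setminus\{0\}$.
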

\begin{proof}
We will use the solution $\varphi$ of the following Neumann boundary problem as our weight function
\begin{eqnarray}
\frac{1}{2}\Delta \varphi + \kappa &=& \frac{1}{{\rm Vol}(M)}\left(\int_M\kappa+\frac{1}{2}\int_{\partial{M}}H\right) \ {\rm on} \ M, \notag \\
\nu(\varphi)&=& H \ {\rm on} \ \partial{M}. \notag
\end{eqnarray}
Choose a nontrivial section  $s\in\Gamma(M,\mathbb{S})$ such that its restriction to boundary satisfies the MIT bag boundary condition or the local boundary condition and $D(e^{-\varphi}s)=\lambda e^{-\varphi}s$. Then (\ref{W1'}) with $\tau= 1,\delta=\frac{1}{2}$ implies that
$$\vert\lambda\vert^2 \Vert s\Vert^2_\varphi \geq \frac{1}{{\rm Vol}(M)}\left(2\int_M\kappa+\int_{\partial{M}}H\right) \Vert s\Vert^2_\varphi$$and that the quality holds only if $
\mathfrak{R}s=\kappa s$ and $Ps=0$ on $M$. From $D(e^{-\varphi}s)=\lambda e^{-\varphi}s$ and $Ps=0$, we have $\nabla_Xs=-\frac{\lambda}{2}X\cdot s-\frac{1}{2}X\cdot\nabla\varphi\cdot s$ for all tangent vector $X$. Combing (\ref{RI}), $\frac{\Delta\varphi}{2}=-\kappa+\frac{\vert\lambda\vert^2}{2}$ and $\mathfrak{R}s=\kappa s$, we obtain $$(\lambda-\bar{\lambda})s+\nabla\varphi\cdot s=0.$$ Multiplying both sides by $\nabla\varphi$, the above identity gives $$(\lambda-\bar{\lambda})^2+\vert\nabla\varphi\vert^2=0$$ and therefore $$\lambda\in\mathbb{R}, \  \nabla\varphi=0.$$ By $\nabla\varphi=0$, we know that  $H=0$ on $\partial{M}$ and $\nabla_Xs +\frac{\lambda}{2}X\cdot s=0$ for all tangent vector $X$. Conversely, if $H=0$ and 
there exists a nontrivial section $s\in\Gamma(M,\mathbb{S})$ satisfying $\mathfrak{R}s=\kappa s$ and $\nabla_Xs +\frac{\lambda}{2}X\cdot s=0$ for all tangent vector $X$ and some constant $\lambda\in\mathbb{R}$. From $\nabla_Xs +\frac{\lambda}{2}X\cdot s=0$, we have $Ds=\lambda s$ and $\mathfrak{R}s=\frac{\lambda^2}{2}s$ where we have used (\ref{RI}) to get the latter identity which implies $\kappa = \frac{\lambda^2}{2}$. 
\end{proof}

\begin{remark}
For the classical Dirac operator on a spin manifold, $4\kappa =$ the scalar curvature $R$, and therefore $2\int_M \kappa + \int_{\partial{M}} H = 2\pi\chi(M).$ This estimate was obtained   by Chen, Wang and Zhang under the local boundary condition(\cite{CWZ}).
\end{remark}
\bigskip
When $\dim M \geq 3$, we have a gradient term in (\ref{W1}) which makes it infeasible to find weight functions in the same way as two dimensional case.
\begin{theorem}\label{Thm2}
Let $M$ be a compact Riemannian manifold with smooth boundary of dimension $n\geq 3$, and $D$ be the 
Dirac operator of a Dirac bundle $\mathbb{S}$ over $M$,  then every eigenvalue $\lambda(D)$ of $D$ under the MIT bag boundary condition or local boundary condition satisfies
\begin{eqnarray*}
\vert\lambda (D)\vert^2\geq\inf_{{\tiny\begin{array}{c} 
u\in C^\infty(M) \\ 
 \int_M u^2=1\\ 
\end{array}}}\left(\int_M\left(\frac{n}{n-2}\vert\nabla u\vert^2+\frac{n}{n-1}\kappa u^2\right)+\frac{n}{2}\int_{\partial{M}}Hu^2\right)
\end{eqnarray*}where $H$ is the mean curvature w.r.t. the outward unit mean curvature of the boundary. The equality holds if and only if $\partial{M}$ is minimal and there is a nontrivial section $s\in\Gamma(M,\mathbb{S})$ satisfying the corresponding boundary condition  such that $\mathfrak{R}s=\kappa s$ and $\nabla_Xs +\frac{\lambda}{n}X\cdot s=0$ for all tangent vector $X$ and some constant $\lambda\in\mathbb{R}$, moreover we also have $\kappa=\frac{n-1}{n}\lambda^2$ in this case. 
\end{theorem}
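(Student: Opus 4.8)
The plan is to run the two‑dimensional argument of Theorem~\ref{T1} in weighted form, the only new feature being that for $n\geq 3$ the gradient term $(2-n)\delta|\nabla\varphi|^2$ in (\ref{W1'}) is no longer absent, so instead of choosing $\varphi$ to absorb the curvature we must \emph{retain} the twistor term and exploit it through a (refined) Kato inequality. So: let $\chi\in\Gamma(M,\mathbb S)$ be a nontrivial eigensection, $D\chi=\lambda\chi$, satisfying the prescribed (MIT bag or local) boundary condition. For a constant $\delta\neq 0$ and an arbitrary weight $\varphi\in C^2(M)$, apply (\ref{W1'}) with $\tau=1$ and $\eta_1=n\eta_2$ to $s:=e^{\varphi}\chi$; then $D_\varphi^{\ast}s=\lambda s$, so the left‑hand side equals $|\lambda|^2\int_M|\chi|^2 e^{2\delta\varphi}$, and, using (\ref{P1}), one finds $P_{(1-n\delta)\varphi}s=e^{\varphi}\Psi$ with
\[
\Psi(X)=\nabla_X\chi+\tfrac{\lambda}{n}X\cdot\chi+n\delta\,X(\varphi)\chi+\delta\,X\cdot\nabla\varphi\cdot\chi ,\qquad \sum_i e_i\cdot\Psi(e_i)=0 .
\]
After bounding $\langle s,\mathfrak R s\rangle\geq\kappa|s|^2$, identity (\ref{W1'}) becomes, writing $u:=e^{\delta\varphi}|\chi|$ (so that $|s|^2e^{2(\delta-1)\varphi}=u^2$), an inequality whose only remaining unwanted ingredient is $\sum_i|\Psi(e_i)|^2$.

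Next I would bound $\sum_i|\Psi(e_i)|^2$ below by the appropriate Kato‑type inequality for the Clifford‑trace‑free field $\Psi$: since (for the local condition $\lambda$ being real) $\sum_i\mathrm{Re}\langle\Psi(e_i),\chi\rangle\,e^i=|\chi|\bigl(\nabla|\chi|+(n-1)\delta|\chi|\nabla\varphi\bigr)$, this produces a positive multiple of $\bigl|\nabla|\chi|+(n-1)\delta|\chi|\nabla\varphi\bigr|^2$. Substituting, expanding this in terms of $|\nabla u|^2$, $u\langle\nabla u,\nabla\varphi\rangle$ and $u^2|\nabla\varphi|^2$, and integrating the term $n\delta\int_M\Delta\varphi\,u^2$ by parts, the boundary $\nu(\varphi)$‑terms cancel and the remaining $\varphi$‑dependent terms recombine — this cancellation, which works precisely because of the value of the Kato constant, is what pins down the coefficients — leaving
\[
|\lambda|^2\int_M u^2\ \geq\ \int_M\Bigl(\tfrac{n}{n-2}|\nabla u|^2+\tfrac{n}{n-1}\kappa u^2\Bigr)+\tfrac{n}{2}\int_{\partial M}Hu^2 ,
\]
independently of the choice of $\delta$ and $\varphi$. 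An eigensection of a Dirac‑type operator cannot vanish on an open set, so $u=e^{\delta\varphi}|\chi|$ is a legitimate (Lipschitz, hence admissible) competitor; after normalizing $\int_M u^2=1$ and taking the infimum over $C^\infty$ test functions the stated bound follows. For the MIT bag condition $\lambda$ is not real and an extra ``energy–momentum'' contribution $-\tfrac1n\mathrm{Im}(\lambda)\,\mathrm{Im}\langle e_i\cdot\chi,\chi\rangle$ enters $\mathrm{Re}\langle\Psi(e_i),\chi\rangle$; applying the Kato estimate to the full complex pairing still dominates the same quantity, so the inequality persists (necessarily strictly, consistent with the equality discussion).

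For the equality case I would mimic Theorem~\ref{T1}: equality forces $\mathfrak R s=\kappa s$, equality in the Kato step, and $u$ a minimizer of the functional; undoing the substitution this yields a first‑order equation $\nabla_X\chi=aX\cdot\chi+bX\cdot\nabla\varphi\cdot\chi+cX(\varphi)\chi$ with constants $a,b,c$. Feeding this into (\ref{RI}) and comparing with $\mathfrak R\chi=\kappa\chi$ and the Euler–Lagrange equation of the functional, then multiplying the resulting spinorial identity by $\nabla\varphi$, one is forced to conclude $\lambda\in\mathbb R$ and $\nabla\varphi\equiv 0$; hence $\partial M$ is minimal, $\nabla_X\chi+\tfrac{\lambda}{n}X\cdot\chi=0$, and (\ref{RI}) gives $\kappa=\tfrac{n-1}{n}\lambda^2$. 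Conversely, such a $\chi$ satisfies $D\chi=\lambda\chi$ and, by (\ref{RI}) again, $\mathfrak R\chi=\tfrac{n-1}{n}\lambda^2\chi=\kappa\chi$, and substituting into the chain above shows every inequality is an equality.

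The main obstacle is the Kato‑type inequality: one needs it with exactly the constant that makes the $\varphi$‑bookkeeping in the second paragraph collapse to the coefficients $\tfrac{n}{n-2},\tfrac{n}{n-1},\tfrac{n}{2}$ — a constant more delicate than the trivial Kato constant $1$ used when $n=2$, and which must be combined with the eigensection equation $D\chi=\lambda\chi$ — together with a clean description of its sharp case so that the rigidity argument goes through. Controlling the energy–momentum term under the MIT bag condition and the regularity of $u=e^{\delta\varphi}|\chi|$ near $\{\chi=0\}$ are the remaining, more routine, technical points.
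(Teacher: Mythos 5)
Your strategy is genuinely different from the paper's. The paper simply discards the twistor term $|P_{(\tau-n\delta)\varphi}s|^2\geq 0$ in Lemma~\ref{ML}/(\ref{W1'}), and then \emph{chooses} the weight $\varphi$ so that $e^{(2-n)\delta\varphi}$ solves the Euler--Lagrange equation (with oblique Neumann boundary condition) of the functional $I$; the eigenvalue bound then drops out with $\inf I$ as the multiplicative constant. You instead want to \emph{keep} the twistor term and trade it against the $\varphi$-dependent terms via a refined Kato inequality, in the hope that the bound becomes $\varphi$-independent so that $u=e^{\delta\varphi}|\chi|$ (or just $|\chi|$) can be fed directly into the infimum.

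The gap is precisely where you flagged it, and it is fatal: the Kato constant you need does not exist. Carrying out the bookkeeping you describe (with $u=e^{\delta\varphi}|\chi|$, integrating $n\delta\int_M\Delta\varphi\,u^2$ by parts, and writing the Kato bound as $|\Psi|^2\geq c\,|\chi|^{-2}\big|\sum_i\mathrm{Re}\langle\Psi(e_i),\chi\rangle e_i\big|^2$) one finds that the coefficients of both $u\langle\nabla u,\nabla\varphi\rangle$ and $u^2|\nabla\varphi|^2$ vanish if and only if
\begin{equation*}
c=\frac{n-1}{n-2},
\end{equation*}
and then $\tfrac{cn}{n-1}=\tfrac{n}{n-2}$, matching the theorem. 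However, the sharp Kato constant for a Clifford-trace-free spinor-valued $1$-form is $\tfrac{n}{n-1}$, not $\tfrac{n-1}{n-2}$. Indeed, minimizing $|\Psi|^2$ subject to $\sum_i e_i\cdot\Psi(e_i)=0$ and $\sum_i\mathrm{Re}\langle\Psi(e_i),\chi\rangle e_i=v$ (by Lagrange multipliers one gets $\Psi(e_i)=\nu_i\chi+\tfrac1n\,e_i\cdot\nu\cdot\chi$ with $\nu=\tfrac{n}{(n-1)|\chi|^2}v$) yields exactly $|\Psi|^2=\tfrac{n}{n-1}\,|v|^2/|\chi|^2$. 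Since $\tfrac{n}{n-1}<\tfrac{n-1}{n-2}$ for all $n\geq 3$, the inequality you need is false: the $\varphi$-dependent cross terms do not cancel, and if you retreat to $\delta=0$ the surviving gradient coefficient is only $\tfrac{n^2}{(n-1)^2}<\tfrac{n}{n-2}$, which gives a strictly weaker statement than the theorem. The structure $D\chi=\lambda\chi$ does not help here because that information is already encoded in the Clifford-trace-free condition on $\Psi$; there is no further linear constraint to exploit. Thus the Kato route cannot reproduce the constant $\tfrac{n}{n-2}$, and the argument must instead proceed as the paper does: drop the twistor term and pick the weight from the minimizer of $I$ (where, incidentally, the paper's use of a smooth positive minimizer also sidesteps the regularity of $|\chi|$ at its zero set, which you left as a technical loose end).

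Your equality discussion, which feeds the Killing-type equation into Lemma~\ref{RI} and deduces $\nabla\varphi\equiv 0$ and $\lambda\in\mathbb{R}$, is essentially the same as the paper's and is fine once the inequality is established by the correct method.
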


\begin{proof}
Let $s\in\Gamma(M,\mathbb{S})$ be a nontrivial section and $\varphi\in C^2(M)$ be a real function to be determined  such that $D(e^{-\tau\varphi}s)=\lambda e^{-\tau\varphi}s$ and $s|_{\partial{M}}$ satisfies the MIT bag boundary condition or the local boundary condition. It follows from (\ref{W1'}) that 
\begin{eqnarray}
\vert\lambda\vert^2\Vert s\Vert^2_{2(\tau-\delta)\varphi}&\geq& \int_{M}\left(n\delta\Delta \varphi + n(2-n)\delta^2\left\vert \nabla \varphi \right\vert ^{2}+\frac{n}{n-1}\kappa\right) \left\vert s\right\vert ^{2}e^{2(\delta-\tau)\varphi}\notag \\ && +\frac{n}{n-1}\int_M\vert P_{(\tau-n\delta)\varphi}s\vert^2 e^{2(\delta-\tau)\varphi}+n\int_{\partial{M}}\left(\frac{H}{2}-\delta\nu(\varphi)\right)\vert s\vert^2 e^{2(\delta-\tau)\varphi}\notag \\ &\geq& \int_{M}\vert s\vert^2 e^{(n\delta-2\tau)\varphi}\left(-\frac{n}{n-2}\Delta + \frac{n}{n-1}\kappa\right)e^{(2-n)\delta\varphi} \notag \\ && +n\int_{\partial{M}}\vert s\vert^2e^{(n\delta-2\tau)\varphi}\left(\frac{\nu}{n-2} + \frac{H}{2}\right)e^{(2-n)\delta\varphi}.\label{311}
\end{eqnarray}
To find an appropriate weight function, we consider the problem of minimizing $$I(u):=\int_M\left(\frac{n}{n-2}\vert\nabla u\vert^2+\frac{n}{n-1}\kappa u^2\right)+\frac{n}{2}\int_{\partial{M}}Hu^2$$ for $u\in W^{1,2}(M)$ subject to the constraint condition $\int_M u^2 =1$. The existence of a minimizer for $I$ is well-known, the case where $\kappa=\frac{1}{4}$scalar curvature played an important role in the study of the Yamabe problem on manifolds with boundary(\cite{Es}). For the sake of completeness, we sketch a proof of the existence of a minimizer for $I$. Let $\{u_\ell\}$ be a minimizing sequence for $I$, then 
\begin{itemize}
\item The Sobolev inequality $\int_{\partial{M}}u^2\leq\epsilon\int_M\vert\nabla u\vert^2 + C_\epsilon\int_Mu^2$( for $u\in W^{1,2}(M)$ and $\epsilon>0$) implies that $\{u_\ell\}$ is bounded in $W^{1,2}(M)$.
\item By the Rellich Theorem and the reflexivity of $W^{1,2}(M)$, passing to a subsequence, we may assume that $\{u_\ell\}$ is convergent in $L^2(M)$ and weakly convergent in $W^{1,2}(M)$. Denote by $u\in W^{1,2}(M)$ the above limit, then $\{u_\ell\vert_{\partial{M}}\}$ converges to $u\vert_{\partial{M}}$ in  $L^2(\partial{M})$. Hence, $u$ is a minimizer for $I$.
\end{itemize}
The Lagrange Multiplier Theorem implies that the minimizer $u$ satisfies 
\begin{eqnarray*}
-\frac{n}{n-2}\Delta u + \frac{n}{n-1}\kappa u&=& (\inf I) u \ {\rm on} \ M,  \\
\nu(u)&=&-\frac{n-2}{2}Hu\ {\rm on} \ \partial{M}.
\end{eqnarray*}
Moreover, by the definition of $I$, $\vert u\vert$ also minimize $I$ for any minimizer $u$ and thus is a solution of the above oblique boundary problem. The standard regularity theory shows $\vert u\vert\in C^2(M)$. Now we know by the Hopf Lemma that $\vert u\vert\neq 0$ on $M$, so the above oblique boundary problem has a positive solution $u$ which will be used to define the weight function \begin{equation}\varphi=\frac{\log u}{(2-n)\delta}\label{phi}.
\end{equation} The inequality (\ref{311}) gives $$\vert\lambda\vert^2\Vert s\Vert^2_{2(\tau-\delta)\varphi}\geq \inf(I) \Vert s\Vert^2_{2(\tau-\delta)\varphi},$$ which implies the desired estimate.

Now we assume the equality in the estimate holds. Let $\tau=n\delta=1$ in (\ref{311}), then $\mathfrak{R}s=\kappa s$, 
$\nabla_X s +\frac{1}{n}X \cdot Ds=0$ for all tangent vector $X$ and therefore $\nabla_X s =-\frac{\lambda}{n}X \cdot s-\frac{1}{n}X\cdot\nabla\varphi\cdot s.$ From (\ref{phi}), it follows that  $-\frac{n-1}{n}\Delta\varphi=\frac{(n-1)(2-n)}{n^2}\vert\nabla\varphi\vert^2+\kappa - \frac{n-1}{n}\vert\lambda\vert^2$. Now we can apply (\ref{RI}) to deduce $$(\lambda-\bar{\lambda})s+\frac{2}{n}\nabla\varphi\cdot s=0.$$
By the same argument in the proof of  Theorem \ref{T1}, we have $H=0$ and $\nabla\varphi =0$. Conversely, assume that $H=0$ and 
there exists a nontrivial section $s\in\Gamma(M,\mathbb{S})$ satisfying $\mathfrak{R}s=\kappa s$ and $\nabla_Xs +\frac{\lambda}{n}X\cdot s=0$ for all tangent vector $X$ and some constant $\lambda\in\mathbb{R}$. From $\nabla_Xs +\frac{\lambda}{n}X\cdot s=0$, we have $Ds=\lambda s$ by definition and $\mathfrak{R}s=\frac{(n-1)\lambda^2}{n}s$ by (\ref{RI}). The latter identity, together with the assumption $\mathfrak{R}s=\kappa s$, implies $\kappa = \frac{(n-1)\lambda^2}{n}$. Since $\kappa$ is a constant and $H=0$ in this case, we have \begin{eqnarray*}
&&\inf_{{\tiny\begin{array}{c} 
u\in C^\infty(M) \\ 
 \int_M u^2=1\\ 
\end{array}}}\left(\int_M\left(\frac{n}{n-2}\vert\nabla u\vert^2+\frac{n}{n-1}\kappa u^2\right)+\frac{n}{2}\int_{\partial{M}}Hu^2\right)\ \\ &&=\inf_{{\tiny\begin{array}{c} 
u\in C^\infty(M) \\ 
 \int_M u^2=1\\ 
\end{array}}}\int_M\frac{n}{n-2}\vert\nabla u\vert^2 + \lambda^2  =\lambda^2.
\end{eqnarray*}
The proof is complete.\end{proof}

\begin{theorem}\label{Thmabs}
Let $\mathbb{S}$ be a Dirac bundle over a compact Riemannian manifold $\left( M,g\right)$ of dimension $n\geq 2$ and $D$ be the Dirac
operator. Then every eigenvalue $\lambda(D)$  under the $b$-APS boundary condition satisfies

\begin{itemize}
\item If $b\leq 0$, $$\lambda(D)^2\geq \inf_{{\tiny\begin{array}{c} 
u\in C^\infty(M) \\ 
 \int_M u^2=1\\ 
\end{array}}}\frac{n}{n-1}\left(\int_M\Big(\frac{n}{n-1}\vert\nabla u\vert^2+\kappa u^2\Big)+\int_{\partial{M}}\Big(\frac{n-1}{2}H-b\Big)u^2\right).$$ 
\item If $b>0$, $$\\ \vert\lambda(D)\vert^2\geq \inf_{{\tiny\begin{array}{c} 
u\in C^\infty(M) \\ 
 \int_M u^2=1\\ 
\end{array}}}\left(\int_M\Big(\vert\nabla u\vert^2+\kappa u^2\Big)+\int_{\partial{M}}\Big(\frac{n-1}{2}H-b\Big)u^2\right).$$
The equality holds if and only if $H=\frac{2b}{n-1},\kappa=0$ and there is some nontrivial parallel section $s\in\Gamma(M,\mathbb{S})$ satisfying the $b$-APS boundary condition, moreover $s|_{\partial{M}}$ must be a $b$-eigensection of $\bar{D}$ in this case.

\end{itemize}
\end{theorem}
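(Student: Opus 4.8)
The plan is to specialize the weighted $L^2$-identity of Lemma \ref{APS1} (the $b$-APS case) to a conformal weight $\varphi$ chosen as the logarithm of the first eigenfunction of a suitable Schr\"odinger-type operator with an oblique (Robin) boundary condition, exactly as in the proof of Theorem \ref{Thm2}. For the case $b > 0$, I would take the parameter choice $\eta_1 = n\eta_2 \ne 0$ and instead use the simpler estimate \eqref{WW2'}; but since the conformal factor that kills the gradient term comes from a different exponent when we want $|\nabla u|^2$ rather than $\frac{n}{n-2}|\nabla u|^2$, I expect to run the general estimate \eqref{WW2} with a value of $\xi$ chosen to make the bracketed coefficients combine into the desired operator $-\Delta + \kappa$ (coefficient $1$ on the Dirichlet energy) after conjugating by $e^{c\varphi}$. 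Concretely: pick $s$ with $D(e^{-\tau\varphi}s) = \lambda e^{-\tau\varphi}s$ and with $e^{(\delta - \tau)\varphi}s|_{\partial M}$ satisfying the $b$-APS condition, substitute into \eqref{WW2}, discard the nonnegative twistor term $\frac{1}{\eta_1^2}|P_{\eta,\cdots}s|^2 \ge 0$, and then complete the square in $\varphi$ so that the interior integrand becomes $|s|^2 e^{(\cdots)\varphi}\big(-\Delta + \kappa\big)e^{(\cdots)\varphi}$ and the boundary integrand becomes $|s|^2 e^{(\cdots)\varphi}\big(\nu + \tfrac{n-1}{2}H - b\big)e^{(\cdots)\varphi}$, matching the Euler--Lagrange system of the functional
\[
I(u) = \int_M\big(|\nabla u|^2 + \kappa u^2\big) + \int_{\partial M}\Big(\tfrac{n-1}{2}H - b\Big)u^2
\]
(and the analogous $\frac{n}{n-1}$-scaled functional when $b \le 0$, where \eqref{WWW3} is an identity rather than an inequality).

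Next I would establish existence, positivity and $C^2$-regularity of the minimizer $u$ of $I$ under $\int_M u^2 = 1$, by the same compactness argument given in the proof of Theorem \ref{Thm2}: boundedness of a minimizing sequence in $W^{1,2}(M)$ via the trace Sobolev inequality $\int_{\partial M}u^2 \le \epsilon\int_M|\nabla u|^2 + C_\epsilon\int_M u^2$, Rellich compactness to pass to the limit, replacement of $u$ by $|u|$, elliptic regularity for the oblique boundary problem, and the Hopf lemma to conclude $u > 0$ on $M$. Setting $\varphi = \frac{\log u}{c}$ for the appropriate constant $c$ then converts the bracketed interior and boundary expressions in the specialized \eqref{WW2}/\eqref{WWW3} into $(\inf I)\,|s|^2 e^{(\cdots)\varphi}$, which after dividing by $\|s\|^2$ in the relevant weighted norm yields $|\lambda(D)|^2 \ge \inf I$ (in the $b\le 0$ case, $\lambda(D)$ is real by the remark after Definition \ref{Ad}, so $\lambda(D)^2 = |\lambda(D)|^2$).

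For the equality discussion (stated only for $b > 0$): if equality holds, then both the discarded twistor term and the $b$-APS boundary defect must vanish, so $P_{\eta,(\cdots)\varphi}s \equiv 0$, $\mathfrak{R}s = \kappa s$, and by Lemma \ref{APS1} either $s|_{\partial M} = 0$ or $e^{(\delta-\tau)\varphi}s|_{\partial M}$ is a $b$-eigensection of $\bar D$. From $P_{\eta,(\cdots)\varphi}s = 0$ together with $D(e^{-\tau\varphi}s) = \lambda e^{-\tau\varphi}s$ one gets a first-order equation $\nabla_X s = aX\cdot s + bX\cdot\nabla\psi\cdot s + cX(\psi)s$ for a suitable multiple $\psi$ of $\varphi$; feeding this into the Ricci-type identity \eqref{RI} and comparing with $\mathfrak{R}s = \kappa s$ and the Euler--Lagrange equation for $\varphi$ forces, as in Theorems \ref{T1} and \ref{Thm2}, $\lambda \in \mathbb{R}$ and $\nabla\varphi \equiv 0$; hence $u$ is constant, which forces $H = \frac{2b}{n-1}$ on $\partial M$ and $\kappa = 0$ on $M$ (from the two Euler--Lagrange equations with $u$ constant), and then $\nabla_X s = -\frac{\lambda}{n}X\cdot s$ collapses via \eqref{RI} to $\mathfrak{R}s = 0$, i.e. $\kappa = 0$ consistent with $s$ parallel when additionally $\lambda = 0$; conversely a parallel $s$ with the $b$-APS condition, $\kappa = 0$, $H = \frac{2b}{n-1}$ realizes the bound by direct substitution. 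The main obstacle I anticipate is purely bookkeeping: choosing the exponents $\tau,\delta$ and the value of $\xi$ (equivalently $\eta_2/\eta_1$) so that the interior Dirichlet coefficient is normalized to $1$ (for $b > 0$) versus $\frac{n}{n-1}$ (for $b \le 0$) while simultaneously the boundary term produces exactly $\frac{n-1}{2}H - b$ and the conformal completion of the square goes through — this is where a sign error or an off-by-a-dimensional-constant slip is most likely, and I would verify it by specializing to the spin case and cross-checking against the known Hijazi-type APS estimates.
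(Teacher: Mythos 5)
Your overall strategy is the one the paper uses — specialize the APS weighted $L^2$ estimate, choose $\varphi$ as the logarithm of a positive minimizer of a Robin-type functional, drop the twistor term, and analyze equality via the Ricci-type identity \eqref{RI}. But there are two concrete gaps in your plan that would actually stall the proof.

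First, your treatment of the $b > 0$ case does not solve the problem you correctly anticipate. You propose to use \eqref{WW2} with a cleverly chosen $\xi$ to land on the operator $-\Delta + \kappa$, but the obstruction for $b > 0$ is not the coefficient in front of the Dirichlet energy: it is that $\lambda(D)$ may be non-real, so when $\delta \neq 0$ the left-hand side $\bigl|D_{\tau\varphi}^\ast s\bigr|^2 = \bigl|\lambda s - \delta\,\nabla\varphi\cdot s\bigr|^2$ contains a cross term $-2\delta\,\mathrm{Re}\,\bar\lambda\,\langle s,\nabla\varphi\cdot s\rangle$ that no longer vanishes (it is proportional to $\mathrm{Im}\,\lambda$ since $\langle s,\nabla\varphi\cdot s\rangle$ is purely imaginary). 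No choice of $\xi$ in \eqref{WW2} removes this; the only way to kill it is to set $\delta = 0$, and then \eqref{WW2} degenerates and loses the $\Delta\varphi$ term. This is precisely why Corollary \ref{weighted-L2-coro} exists: it performs the limit $\delta \to 0$ while retaining a second-order term via the auxiliary parameter $r$. The paper routes the $b>0$ case through \eqref{weighted-L2-est2} (combined with the boundary identity \eqref{D} and \eqref{maps}), not through \eqref{WW2}. Your plan as written would get stuck here. Related to this: you should also be careful to impose the $b$-APS condition on the \emph{same} conjugate $e^{(\delta-\tau)\varphi}s$ that satisfies the eigenvalue equation — the APS condition is spectral, hence not invariant under multiplication by $e^{c\varphi}$ (unlike the MIT/local conditions in Theorem \ref{Thm2}), so your statement ``pick $s$ with $D(e^{-\tau\varphi}s)=\lambda e^{-\tau\varphi}s$ and with $e^{(\delta-\tau)\varphi}s|_{\partial M}$ APS'' is internally inconsistent when $\delta \neq 0$.

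Second, your equality analysis is incomplete: after concluding $\nabla\varphi \equiv 0$ and $\nabla_X s = -\tfrac{\lambda}{n}X\cdot s$, you cannot simply ``collapse via \eqref{RI} to $\mathfrak{R}s = 0$'' — \eqref{RI} with $a = -\lambda/n$, $b=c=0$ gives $\mathfrak{R}s = \tfrac{(n-1)\lambda^2}{n}s$, which only equals $0$ once you know $\lambda = 0$. The step you are missing is the boundary identity: from $\nabla_X s = -\tfrac{\lambda}{n}X\cdot s$ and \eqref{DD} one gets, on $\partial M$, an identity of the form $\bigl(b - \tfrac{n-1}{2}H\bigr)s = \tfrac{n-1}{n}\lambda\,\nu\cdot s$, and multiplying both sides by $\nu$ forces $\lambda = 0$ and $b = \tfrac{n-1}{2}H$. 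Only then do $\kappa = 0$ and the parallelism of $s$ follow. Without this Clifford/boundary step, your chain of implications does not close.
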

\begin{proof}
Suppose that the restriction of $e^{(\delta-\tau)\varphi}s$ to $\partial{M}$ satisfies the $b$-APS boundary condition and that 
$D(e^{(\delta-\tau)\varphi}s)=\lambda e^{(\delta-\tau)\varphi}s,$ where $s\in\Gamma(M,\mathbb{S})$ is a nontrivial section and $\varphi\in C^2(M)$ is a real function to be determined. 

Assume $b\leq 0$, as $\lambda$ is real in this case, we have  
\begin{eqnarray}\vert D_{\tau\varphi}^*s\vert^2 = \vert D_{(\tau-\delta)\varphi}^*s-\delta\nabla\varphi\cdot s\vert^2 = \vert \lambda s-\delta\nabla\varphi\cdot s\vert^2 = \vert\lambda\vert^2\vert s\vert^2+\delta^2\vert\nabla\varphi\vert^2\vert s\vert^2. \label{312} 
\end{eqnarray} 
Combining  (\ref{WW2'}) and (\ref{312}) gives
\begin{eqnarray}
\lambda^2\Vert s\Vert_{2(\tau-\delta)\varphi}^{2}&\geq& \int_{M}\left( n\delta\Delta \varphi - (n-1)^2\delta^2 \left\vert \nabla \varphi \right\vert ^{2}+\frac{n}{n-1}\kappa
\right) \left\vert s\right\vert ^{2}e^{2(\delta-\tau)\varphi}\notag \\ && +\frac{n}{n-1}\int_M\vert P_{(\tau-n\delta)\varphi}s\vert^2e^{2(\delta-\tau)\varphi}\notag \\ &&+n\int_{\partial{M}}\left(\frac{H}{2}-\frac{b}{n-1}-\delta\nu(\varphi)\right)\vert s\vert^2e^{2(\delta-\tau)\varphi} \notag \\ &\geq& \int_{M}\vert s\vert^2e^{\big(\frac{n^2+1}{n}\delta-2\tau\big)\varphi}\left(-\frac{n^2}{(n-1)^2}\Delta + \frac{n}{n-1}\kappa\right)e^{-\frac{(n-1)^2\delta}{n}\varphi} \label{313}\\ && +n\int_{\partial{M}}\vert s\vert^2e^{\big(\frac{n^2+1}{n}\delta-2\tau\big)\varphi}\left(\frac{n}{(n-1)^2}\nu +\frac{H}{2}-\frac{b}{n-1}\right)e^{-\frac{(n-1)^2\delta}{n}\varphi}.\notag
\end{eqnarray}
By the same argument as in the proof of Theorem \ref{Thm2},  we choose a positive minimizer $u\in C^2(M)$ for the variation integral $$I(u):=\int_M\left(\frac{n^2}{(n-1)^2}\vert\nabla u\vert^2+\frac{n}{n-1}\kappa u^2\right)+n\int_{\partial{M}}\left(\frac{H}{2}-\frac{b}{n-1}\right)u^2$$ subject to the constraint condition $\int_M u^2 =1$. Again, the Lagrange Multiplier Theorem gives
\begin{eqnarray}
-\frac{n^2}{(n-1)^2}\Delta u + \frac{n}{n-1}\kappa u&=& \inf(I) u \ {\rm on} \ M, \notag \\
\nu(u)&=& \frac{(n-1)^2}{n}\left(\frac{b}{n-1}-\frac{H}{2}\right)u \ {\rm on} \ \partial{M}. \label{314}
\end{eqnarray}
Choosing the weight function $\varphi = -\frac{n}{(n-1)^2\delta}\log u$, then the desired estimate follows from (\ref{313}).

Assume that the equality holds and $\lambda\neq 0$. Let $\tau=\delta=\frac{1}{1-n}$, then $Ds=\lambda s$. By (\ref{313}), we have $\bar{D}s|_{\partial{M}}=bs|_{\partial{M}}, \mathfrak{R}s=\kappa s$ and $P_{\varphi}s=0$. The last one implies\begin{equation}
\nabla_X s = -\frac{\lambda}{n}X\cdot s + \frac{1}{n}X\cdot\nabla\varphi\cdot s + X(\varphi) s\label{APSE}
\end{equation}
for all tangent vector $X$. Substituting $-\frac{n}{n-1}\Delta\varphi = \vert\nabla\varphi\vert^2-\frac{n\kappa}{n-1}+\lambda^2$ and (\ref{APSE}) into (\ref{RI}), we obtain $\big((n-1)\lambda^2 -(n-1)\vert\nabla\varphi\vert^2-n\kappa\big)s=2\lambda \nabla\varphi\cdot s$ which in turn implies $\nabla\varphi =0$, and therefore $\nabla_Xs=-\frac{\lambda}{n}X\cdot s$(by (\ref{APSE})). Now it follows from (\ref{DD}) that $(b-\frac{n-1}{2}H)s|_{\partial{M}} = \frac{n-1}{n}\lambda\nu\cdot s|_{\partial{M}}$ which, multiplying both sides by $\nu$, gives $$\lambda =0, b=\frac{n-1}{2}H.$$By $\mathfrak{R}s=\kappa s$ and $\nabla_Xs=0$, we also have $\kappa =0$. The converse direction is clear, so we have finished the proof of the case $b\leq 0$.

When $b>0$, to avoid the possible mixed term in (\ref{312}), we use a variant of Lemma \ref{mAPS} to allow $\delta=0$ which is given by  Corollary \ref{weighted-L2-coro}. The rest is parallel to the proof  in the case $b\leq 0$.   
\end{proof}

\bigskip

The same argument based on Lemma \ref{mAPS} gives the estimate under the modified $b$-APS boundary condition. 
\begin{theorem}\label{Thmmabs}
Let $\mathbb{S}$ be a Dirac bundle over a compact Riemannian manifold $\left( M,g\right)$ of dimension $n\geq 2$ and $D$ be the Dirac
operator. Then every eigenvalue $\lambda(D)$  under the modified $b$-APS boundary condition satisfies

\begin{itemize}
\item If $b\leq 0$, $$\lambda(D)^2\geq \inf_{{\tiny\begin{array}{c} 
u\in C^\infty(M) \\ 
 \int_M u^2=1\\ 
\end{array}}}\frac{n}{n-1}\left(\int_M\Big(\frac{n}{n-1}\vert\nabla u\vert^2+\kappa u^2\Big)+\frac{n-1}{2}\int_{\partial{M}}Hu^2\right).$$ 
The equality holds if and only if $\partial{M}$ is minimal and there is a nontrivial section $s\in\Gamma(M,\mathbb{S})$ satisfying the modified $b$-APS boundary condition such that $\mathfrak{R}s=\kappa s$ and $\nabla_Xs +\frac{\lambda}{n}X\cdot s=0$ for all tangent vector $X$ and some constant $\lambda\in\mathbb{R}$, moreover we also have $\kappa=\frac{n-1}{n}\lambda^2$ in this case. 
\item If $b>0$, $$\\ \vert\lambda(D)\vert^2\geq \inf_{{\tiny\begin{array}{c} 
u\in C^\infty(M) \\ 
 \int_M u^2=1\\ 
\end{array}}}\left(\int_M\Big(\vert\nabla u\vert^2+\kappa u^2\Big)+\int_{\partial{M}}\Big(\frac{n-1}{2}H-b\Big)u^2\right).$$
The equality holds if and only if $H=\frac{2b}{n-1},\kappa=0$ and there is some nontrivial parallel section $s\in\Gamma(M,\mathbb{S})$ satisfying the $b$-APS boundary condition, moreover $s|_{\partial{M}}$ must be a $b$-eigensection of $\bar{D}$ in this case.

\end{itemize}
\end{theorem}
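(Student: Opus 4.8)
The plan is to follow the template already established in the proofs of Theorems \ref{Thm2} and \ref{Thmabs}, since the excerpt tells us explicitly that ``the same argument based on Lemma \ref{mAPS}'' yields the result. I would split into the two cases $b\le 0$ and $b>0$ exactly as for the $b$-APS condition.

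For the case $b\le 0$: here $\lambda(D)$ is real, so starting from a nontrivial $s$ with $D(e^{(\delta-\tau)\varphi}s)=\lambda e^{(\delta-\tau)\varphi}s$ and $e^{(\delta-\tau)\varphi}s|_{\partial M}$ satisfying the modified $b$-APS condition, I would use the algebraic identity $\vert D_{\tau\varphi}^*s\vert^2=\vert\lambda\vert^2\vert s\vert^2+\delta^2\vert\nabla\varphi\vert^2\vert s\vert^2$ (as in (\ref{312})) and plug it into (\ref{WWW3}) with $\eta_1=n\eta_2$, obtaining the analogue of (\ref{WW2'}) but \emph{with $b$ absent from the boundary term}, since (\ref{maps}) gives $\int_{\partial M}\langle\bar D t,t\rangle=0$ when $b\le 0$. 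Completing the square in the bulk and boundary terms (substituting $\xi=-1/(n-1)$, $\tau=\delta=\frac{1}{1-n}$ at the equality stage) reduces the bound to the infimum of $I(u)=\frac{n}{n-1}\big(\int_M(\frac{n}{n-1}\vert\nabla u\vert^2+\kappa u^2)+\frac{n-1}{2}\int_{\partial M}Hu^2\big)$; the weight $\varphi$ is then defined via $\log$ of the positive minimizer of $I$ (whose existence and regularity follow verbatim from the Sobolev/Rellich argument in the proof of Theorem \ref{Thm2}, and whose positivity follows from the Hopf lemma applied to the oblique boundary problem produced by the Lagrange multiplier theorem). For the equality discussion I would, as before, read off $\mathfrak Rs=\kappa s$, $P_\varphi s=0$, hence $\nabla_X s=-\tfrac{\lambda}{n}X\cdot s+\tfrac1n X\cdot\nabla\varphi\cdot s+X(\varphi)s$, feed this plus the equation satisfied by $\varphi$ into (\ref{RI}) to force $\nabla\varphi=0$, and conclude $H=0$, $\nabla_X s+\tfrac{\lambda}{n}X\cdot s=0$, $\kappa=\tfrac{n-1}{n}\lambda^2$; the converse is the short computation already used in Theorem \ref{Thm2}.

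For the case $b>0$: now $\lambda$ need not be real, so the cross term in $\vert D_{\tau\varphi}^*s\vert^2$ cannot be discarded and one must take $\delta=0$. Here I would use Corollary \ref{weighted-L2-coro} (the $\delta=0$ version preserving $\Delta\varphi$) combined with the boundary estimate (\ref{maps}) for $b>0$, i.e. $\int_{\partial M}\langle\bar D t,t\rangle\le b\int_{\partial M}\vert t\vert^2$, exactly as (\ref{WW3}) feeds into the proof of Theorem \ref{Thmabs}. This produces $\vert\lambda\vert^2\Vert s\Vert^2\ge\inf I\cdot\Vert s\Vert^2$ with $I(u)=\int_M(\vert\nabla u\vert^2+\kappa u^2)+\int_{\partial M}(\tfrac{n-1}{2}H-b)u^2$, the weight being $\varphi=-(\tau+r)^{-1}\log u$ for the appropriate minimizer. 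In the equality case, $\nabla\varphi=0$ forces $s$ parallel, $\kappa=0$, and then (\ref{DD}) gives $(b-\tfrac{n-1}{2}H)s|_{\partial M}=0$ while $\bar D s|_{\partial M}=bs|_{\partial M}$ identifies $s|_{\partial M}$ as a $b$-eigensection; since a parallel section has $Ds=0$ and the modified condition reduces to the $b$-APS condition on $s$ itself here, one gets $H=\tfrac{2b}{n-1}$.

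The only genuinely new point compared with Theorem \ref{Thmabs} is that the boundary analysis must go through the \emph{modified} $b$-APS condition rather than the plain one; but this is precisely what (\ref{maps}) already packages, so the main obstacle is merely bookkeeping — keeping track of the anticommutation $\bar D\cdot\nu=-\nu\cdot\bar D$ when splitting $t=e^{(\delta-\tau)\varphi}s$ into spectral pieces, and making sure the sign conventions in (\ref{WWW3}) versus (\ref{WW3}) are applied in the correct regime of $b$. I expect no essential difficulty beyond this, and the write-up can legitimately be compressed by pointing to the proofs of Theorems \ref{Thm2} and \ref{Thmabs} for the minimizer construction and the equality-case manipulation of (\ref{RI}).
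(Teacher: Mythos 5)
Your proposal is correct and is essentially the paper's intended argument: the paper itself reduces the proof of Theorem~\ref{Thmmabs} to the single remark ``the same argument based on Lemma~\ref{mAPS},'' and you have correctly unpacked this, in particular noting that for $b\leq 0$ the identity (\ref{WWW3}) carries no $-b$ term and no extra inequality (hence the equality analysis runs as in Theorem~\ref{T1}/\ref{Thm2} rather than in Theorem~\ref{Thmabs}, producing the $H=0$, $\nabla_X s+\frac{\lambda}{n}X\cdot s=0$ characterization), while for $b>0$ the boundary bound (\ref{maps}) coincides with the plain $b$-APS case so the result and its equality discussion coincide with those of Theorem~\ref{Thmabs}.
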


Now we deduce a lower bound in terms of the volume of the underlying manifold assuming the curvature $\mathfrak{R}$ of $\mathbb{S}$ and the mean curvature $H$ of $\partial{M}$ are bounded from below by constants depending on $(M,g)$.
Li and Zhu(\cite{LZ}) proved the following sharp Sobolev inequality.
Let $(M,g)$ be a Riemannian manifold, $n:=\dim M\geq 3$, then for all for all $u\in C^\infty(M)$,

\begin{eqnarray}
\left(\int_M \vert u\vert^{\frac{2n}{n-2}}dV_g\right)^{\frac{n-2}{n}}\leq S\int_M\vert\nabla u\vert^2 dV_g+ A\left(\int_M u^2dV_g +\int_{\partial{M}} u^2 dA_g\right)\label{LZ}
\end{eqnarray} where 
$S_n=\frac{1}{\pi n(n-2)}\left(\frac{2\Gamma(n)}{\Gamma(\frac{n}{2})}\right)^{\frac{2}{n}}$ and $A=A(M,g)$ is a positive constant. 

\begin{corollary}
Let $M$ be a compact Riemannian manifold with smooth boundary of dimension $n\geq 3$, $D$ be the 
Dirac operator of a Dirac bundle $\mathbb{S}$ over $M$
Every eigenvalue $\lambda(D)$ of $D$ under the $b$-APS boundary condition or modified $b$-APS boundary condition satisfies

\begin{itemize}
\item If $b\leq 0$, $\kappa\geq\frac{n\gamma}{(n-1)S_n}$ and $H\geq \frac{2n^2\gamma}{(n-1)^3S_n}$, then \begin{eqnarray}
\lambda (D)^2&\geq&\frac{n^2}{(n-1)^2S_n{\rm Vol}(M,g)^{\frac{2}{n}}}.\label{vol}\end{eqnarray}
\item If $b>0$, $\kappa\geq\frac{\gamma}{S_n}$ and $H>\frac{2\gamma}{(n-1)S_n}$, then \begin{eqnarray}
\vert\lambda (D)\vert^2&\geq&\frac{1}{S_n{\rm Vol}(M,g)^{\frac{2}{n}}}.\label{vol'}\end{eqnarray}
\end{itemize}
where $$\gamma:=\sup_{{\tiny\begin{array}{c} 
u\in C^\infty(M) \\ 
 \int_M u^2+\int_{\partial{M}}u^2=1\\ 
\end{array}}}\left({\rm Vol}(M,g)^{-\frac{2}{n}}\int_M u^2 - S_n\int_M\vert\nabla u\vert^2\right)$$ and $S_n=\frac{1}{\pi n(n-2)}\left(\frac{2\Gamma(n)}{\Gamma(\frac{n}{2})}\right)^{\frac{2}{n}}$.
\end{corollary}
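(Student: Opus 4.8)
The plan is to combine the two eigenvalue lower bounds from Theorems \ref{Thmabs} and \ref{Thmmabs} with the sharp Li--Zhu Sobolev inequality (\ref{LZ}) and a H\"older-type rescaling argument. Both theorems reduce $\lambda(D)^2$ (or $|\lambda(D)|^2$) to an infimum over $u\in C^\infty(M)$ with $\int_M u^2=1$ of a functional of the shape
\begin{equation*}
c_1\int_M|\nabla u|^2 + c_2\int_M\kappa u^2 + c_3\int_{\partial M}\Big(\tfrac{n-1}{2}H-b\Big)u^2,
\end{equation*}
where $(c_1,c_2,c_3)=\big(\tfrac{n}{n-1}\cdot\tfrac{n}{n-1},\tfrac{n}{n-1},\tfrac{n}{n-1}\big)$ in the case $b\le 0$ and $(c_1,c_2,c_3)=(1,1,1)$ (with the boundary term kept) in the case $b>0$. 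The first step is therefore to plug in the hypotheses $\kappa\ge \tfrac{n\gamma}{(n-1)S_n}$ and $H\ge\tfrac{2n^2\gamma}{(n-1)^3S_n}$ (resp.\ $\kappa\ge\tfrac\gamma{S_n}$, $H>\tfrac{2\gamma}{(n-1)S_n}$) so that, after dropping the $-b\ge 0$ term in the boundary integral, the functional is bounded below by
\begin{equation*}
\frac{n^2}{(n-1)^2}\int_M|\nabla u|^2 + \frac{n^2\gamma}{(n-1)^2S_n}\Big(\int_M u^2 + \frac{n}{(n-1)}\cdot\frac{1}{n}\!\!\int_{\partial M}u^2\Big)
\end{equation*}
— the constants having been arranged precisely so the coefficients match $S_n\cdot(\cdot)$ against $(\cdot)$; one checks the boundary coefficient $\tfrac{n-1}{2}H\cdot\tfrac{n}{n-1}$ against $\gamma\cdot$(the $\int_{\partial M}u^2$ term) and it works out. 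The analogous bookkeeping for $b>0$ gives $\int_M|\nabla u|^2 + \tfrac\gamma{S_n}\big(\int_M u^2+\int_{\partial M}u^2\big)$.

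The second step is to invoke the definition of $\gamma$: by construction, for every $v\in C^\infty(M)$ with $\int_M v^2+\int_{\partial M}v^2=1$ one has
\begin{equation*}
S_n\int_M|\nabla v|^2 + \gamma\Big(\int_M v^2+\int_{\partial M}v^2\Big)\ \ge\ \mathrm{Vol}(M,g)^{-2/n}\int_M v^2.
\end{equation*}
Applying this (after normalizing $u$ so that $\int_M u^2+\int_{\partial M}u^2=1$ — the homogeneity degree $2$ of every term lets us rescale freely, and the constraint $\int_M u^2=1$ from the theorem can be relaxed to $\int_M u^2\le 1$ since the functional is monotone and we are taking an infimum) yields, in the $b\le 0$ case,
\begin{equation*}
\lambda(D)^2\ \ge\ \frac{n^2}{(n-1)^2S_n}\cdot\mathrm{Vol}(M,g)^{-2/n}\int_M u^2\ =\ \frac{n^2}{(n-1)^2S_n\,\mathrm{Vol}(M,g)^{2/n}},
\end{equation*}
using $\int_M u^2=1$; and similarly $|\lambda(D)|^2\ge S_n^{-1}\mathrm{Vol}(M,g)^{-2/n}$ when $b>0$. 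This gives (\ref{vol}) and (\ref{vol'}).

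The main obstacle I anticipate is the precise matching of constants in the first step: one must verify that the three coefficients $(c_1,c_2,c_3)$ coming out of Theorems \ref{Thmabs}/\ref{Thmmabs} are compatible with a single common factor multiplying the Li--Zhu right-hand side $S_n\int|\nabla u|^2 + A(\int_M u^2+\int_{\partial M}u^2)$, and that the stated thresholds on $\kappa$ and $H$ are exactly what is needed to absorb the Sobolev constant $S_n$ versus $A$ (here played by $\gamma$) with no slack. A secondary subtlety is legitimizing the relaxation of the normalization: the infimum in the theorems is over $\int_M u^2=1$, whereas the Li--Zhu inequality and the definition of $\gamma$ use the combined $L^2$ norm $\int_M u^2+\int_{\partial M}u^2$; this is handled cleanly because every functional involved is homogeneous of degree two in $u$, so scaling a competitor does not change the Rayleigh quotient. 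One should also note that the hypothesis $b\le 0$ (resp.\ $b>0$) is what guarantees $-b\ge 0$ can be discarded from the boundary integral without reversing the inequality, and $n\ge 3$ is needed for (\ref{LZ}) to hold.
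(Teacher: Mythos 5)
Your handling of the case $b\le 0$ is correct and matches the paper: with $b\le 0$ the term $-b\ge 0$, so after inserting the curvature and mean-curvature lower bounds one obtains a functional with coefficients dominating $\tfrac{n^2}{(n-1)^2 S_n}$ times the Li--Zhu right-hand side (\ref{SI}); the boundary coefficient is even larger by a factor $\tfrac{n}{n-1}>1$, which does no harm because $\gamma>0$ (take $u=\mathrm{const}$ in its definition), and the normalization discrepancy between $\int_M u^2=1$ and $\int_M u^2+\int_{\partial M}u^2=1$ is, as you say, innocuous by degree-$2$ homogeneity.

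However, your treatment of the case $b>0$ has a genuine gap. You claim the $-b$ term in $\int_{\partial M}\bigl(\tfrac{n-1}{2}H-b\bigr)u^2$ can be discarded, and your closing sentence asserts that both $b\le 0$ and $b>0$ guarantee "$-b\ge 0$ can be discarded." But when $b>0$ one has $-b<0$, so dropping it would move the estimate in the wrong direction. Nor does the hypothesis $H>\tfrac{2\gamma}{(n-1)S_n}$ force $\tfrac{n-1}{2}H-b\ge\tfrac{\gamma}{S_n}$: $b$ can be arbitrarily large. The paper gets around this by a rescaling argument that your proposal does not mention. Replace $g$ by $\sigma^2 g$ and observe that $D_\sigma=\sigma^{-1}D$, $\mathfrak{R}_\sigma=\sigma^{-2}\mathfrak{R}$, $H_\sigma=\sigma^{-1}H$, while $b$ is an unscaled spectral threshold; rewriting the Li--Zhu-type inequality (\ref{SI}) in the metric $\sigma^2 g$ gives (\ref{SI'}), whose boundary coefficient is $\gamma/(\sigma S_n)$. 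Applying Theorems \ref{Thmabs}/\ref{Thmmabs} to $(M,\sigma^2 g)$, the boundary term to be absorbed is $\tfrac{(n-1)H}{2\sigma}-b=\tfrac{1}{\sigma}\bigl(\tfrac{(n-1)H}{2}-b\sigma\bigr)$, and the \emph{strict} hypothesis $\tfrac{(n-1)H}{2}>\tfrac{\gamma}{S_n}$ guarantees $\tfrac{(n-1)H}{2}-b\sigma\ge\tfrac{\gamma}{S_n}$ once $\sigma$ is small enough. This is why the hypothesis in the $b>0$ case is a strict inequality, a detail your argument does not use and cannot explain. Without the rescaling step your proof does not close.
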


\begin{proof}
By Li-Zhu inequality (\ref{LZ}), we have $\gamma<\infty$. It follows from the definition of $\gamma$ that \begin{eqnarray}
{\rm Vol}(M,g)^{-\frac{2}{n}}\int_M u^2dV_g\leq S_n\int_M\vert\nabla u\vert^2 dV_g+ \gamma\left(\int_M u^2dV_g +\int_{\partial{M}} u^2 dA_g\right)\label{SI}
\end{eqnarray} holds for all $u\in C^\infty(M)$. The estimate (\ref{vol}) is direct consequence of (\ref{SI}) and Theorems \ref{Thmabs}, \ref{Thmmabs}.

We prove (\ref{vol'}) by a rescaling argument. For a given constant $\sigma\neq 0$, we rewrite (\ref{SI}) in terms of the homethetic metric $\sigma^2 g$ as follows   
\begin{eqnarray}\sigma^{-2}{\rm Vol}(M,g)^{-\frac{2}{n}}\int_M u^2dV_{\sigma^2g}&=&
{\rm Vol}(M,\sigma^2g)^{-\frac{2}{n}}\int_M \vert u\vert^2dV_{\sigma^2g}\notag \\&\leq& S_n\int_M\vert\nabla_{\sigma^2g} u\vert_{\sigma^2g}^2 dV_{\sigma^2g}+ \frac{\gamma}{\sigma^{2}}\int_M u^2dV_{\sigma^2g}\notag \\ && +\frac{\gamma}{\sigma}\int_{\partial{M}} u^2 dA_{\sigma^2g}.\label{SI'}
\end{eqnarray}

On the other hand, it is straightforward to see that $(\mathbb{S},\langle,\rangle,\nabla)$ forms a Dirac bundle over $(M,\sigma^2 g)$ with the Clifford multiplication $\cdot_{\sigma}$ defined by $$X\cdot_{\sigma}s:=\sigma X\cdot s$$ for any tangent vector $X$ of $M$ and section $s$ of $\mathbb{S}$. The associated Dirac operator $D_\sigma$ is thus given by \begin{eqnarray}
D_\sigma = \frac{1}{\sigma}D.\label{D'}\end{eqnarray}Similarly, the curvature in (\ref{R-operator}) is given by \begin{eqnarray}
\mathfrak{R}_\sigma = \frac{1}{\sigma^2}\mathfrak{R},\label{R'}\end{eqnarray}and the mean curvature of $\partial{M}$ w.r.t. $\sigma^2g$ is given by \begin{eqnarray}
H_\sigma = \frac{1}{\sigma}H.\label{H'}\end{eqnarray}

Theorems \ref{Thmabs}, \ref{Thmmabs} applied to the Dirac bundle $(\mathbb{S},\langle,\rangle,\nabla)$ over $(M,\sigma^2 g)$ imply
\begin{eqnarray}
\sigma^{-2}\vert\lambda (D)\vert^2&=&\vert\lambda (D_\sigma)\vert^2\notag \\ &\geq&\inf_{{\tiny\begin{array}{c} 
u\in C^\infty(M) \\ 
 \int_M u^2dV_{\sigma^2g}=1\\ 
\end{array}}}\left(\int_M\left(\vert\nabla_{\sigma^2g} u\vert_{\sigma^2g}^2+\kappa_\sigma u^2\right)dV_{\sigma^2g}+\int_{\partial{M}}\Big(\frac{n-1}{2}H_\sigma-b\Big) u^2dA_{\sigma^2g}\right) \notag\\ &=&\inf_{{\tiny\begin{array}{c} u\in C^\infty(M) \\ 
 \int_M u^2dV_{\sigma^2g}=1\\ 
\end{array}}}\left(\int_M\left(\vert\nabla_{\sigma^2g} u\vert_{\sigma^2g}^2+\frac{\kappa}{\sigma^2} u^2\right)dV_{\sigma^2g}+\int_{\partial{M}}\Big(\frac{(n-1)H}{2\sigma}-b\Big) u^2dA_{\sigma^2g}\right).\notag\\ \label{res}
\end{eqnarray}
By the assumption $H>\frac{2\gamma}{(n-1)S_n}$, we know that $\frac{(n-1)H}{2\sigma}-b\geq \frac{\gamma}{\sigma S_n}$ holds for sufficiently small $\sigma>0$.  Fixing such a small $\sigma>0$, the following estimate follows from  (\ref{SI'}) and (\ref{res})
\begin{eqnarray*}
\sigma^{-2}\vert\lambda (D)\vert^2&\geq& S_n^{-1}\sigma^{-2}{\rm Vol}(M,g)^{\frac{2}{n}}
\end{eqnarray*}i.e.,\begin{eqnarray*}
\vert\lambda (D)\vert^2&\geq&\frac{1}{S_n{\rm Vol}(M,g)^{\frac{2}{n}}},
\end{eqnarray*}
which concludes the proof.\end{proof}

%\begin{acknowledgements}\label{ackref}
%We would like to thank Professor Yum-Tong Siu and Professor Clifford Taubes for interest and support.
%We thank Harvard math department for the excellent research environment, where the majority
%of the work was carried out. Discussions with Hai Lin, Baosen Wu, Yi Xie and
%Jie Zhou were helpful. Last, we thank the anonymous referee for pointing out
%many useful references. The work of Q. Ji is partially supported by NSFC11171069 and NSFC11322103.
%\end{acknowledgements}

{\small Addresses:}

{\small Qingchun Ji}

{\small School of Mathematics, Fudan University}

{\small Shanghai Center for Mathematical Sciences} 

{\small Shanghai 200433, China }

{\small Email: qingchunji@fudan.edu.cn}

\bigskip

{\small Li Lin}

{\small School of Mathematics, Fudan University}

{\small Shanghai Center for Mathematical Sciences} 

{\small Shanghai 200433, China }

{\small Email: 18110840002@fudan.edu.cn}

\end{document}